\documentclass[11pt]{article}
\usepackage{amsmath,amssymb,amsthm}
\usepackage{booktabs}
\usepackage{hyperref}

\theoremstyle{plain}
\newtheorem{theorem}{Theorem}
\newtheorem{lemma}[theorem]{Lemma}
\newtheorem{proposition}[theorem]{Proposition}
\newtheorem{corollary}[theorem]{Corollary}

\theoremstyle{remark}
\newtheorem{remark}[theorem]{Remark}

\newcommand{\F}{\mathbb{F}}
\newcommand{\HHull}{\mathrm{Hull}_H}

\title{Comparative monotonicity of linear codes by Hermitian and symplectic hull dimensions}
\author{Keita Ishizuka
\thanks{Information Technology R\&D Center,
Mitsubishi Electric Corporation, Kamakura, Kanagawa, Japan.
email: \texttt{keitaishizuka1994@gmail.com}}}
\date{}

\begin{document}
\maketitle

\begin{abstract}
Extending recent work on the Euclidean hull, we derive closed-form ratio
decompositions for the number of linear codes with prescribed Hermitian and
symplectic hull dimension. The Hermitian ratio admits a uniform lower bound
of at least $2/3$, while the symplectic ratio decays to $1/q^2$ asymptotically;
a comparative analysis traces this qualitative difference to the Witt
classification of the corresponding classical groups. The results translate
directly into monotonicity statements for the number of entanglement-assisted
quantum codes obtainable from Hermitian-hull-graded $[n, k]_{q^2}$ and
symplectic-hull-graded $[2n, k]_q$ classical codes via the
Guenda-Jitman-Gulliver and Wilde-Brun constructions, respectively.
\end{abstract}

\medskip
\noindent\emph{Keywords:} Hermitian hull, symplectic hull, mass formula, ratio decomposition, entanglement-assisted quantum code.

\medskip
\noindent\emph{Mathematics Subject Classification (2020):} 94B05, 81P70, 11T71.

\section{Introduction}

Counting subspaces of a vector space with prescribed structural invariants is a classical
theme in combinatorics and algebra. For a finite-dimensional vector space $V$ over $\F_q$
equipped with a non-degenerate bilinear (or sesquilinear) form $B$, the
\emph{hull} of a $k$-dimensional subspace $C \subseteq V$ is the intersection
$C \cap C^\perp$ with its dual under the form. The hull dimension interpolates
between two extremes: \emph{LCD} codes (hull dimension zero,
introduced by Massey~\cite{Massey1992}) and \emph{self-orthogonal} codes ($C \subseteq C^\perp$).
The number of $k$-dimensional subspaces of fixed hull dimension admits a
closed mass formula, recently determined by several authors via the action of the
corresponding classical group on the Grassmannian (Carlet et al.~\cite{CarletMesnagerTang2018},
Liu and Wang~\cite{LiuWang2019}, Li, Shi, and Ling~\cite{LiShiLing2024}, and Li et al.~\cite{LSL2024}).

A natural follow-up question is how this count varies with the hull dimension.
Bouyuklieva, Bouyukliev, and \"Ozbudak (BBO)~\cite{BBOz2026} initiated this systematic study
for the Euclidean inner product, establishing a ratio decomposition
$A_{n,k,\ell,q} = \alpha_{n,k,\ell,q}\,(q^{\ell+1}-1)\, A_{n,k,\ell+1,q}$
with $\alpha \geq 1/2$ in all valid cases, the bound being attained at certain
parity-restricted parameters governed by the quadratic character of $\F_q^*$.
Their proof builds on the closed-form mass formulas of Li, Shi, and
Ling~\cite{LiShiLing2024} (with the earlier formula of
Sendrier~\cite{Sendrier1997} being too unwieldy for this purpose). The
analysis first splits according to the parity of $q$ (odd vs.\ even),
then within each into four sub-cases by the parities of $n$ and $k - \ell$;
the equality case $\alpha = 1/2$ additionally requires the residue of $q$
modulo $4$ to be specified when $n$ is even.

The natural next step is to ask whether the BBO framework extends to other
inner products. The Hermitian and symplectic inner products are the
remaining standard choices, and their hull dimensions arise prominently in
quantum coding theory: Hermitian-hull-graded codes parameterize entanglement-assisted
quantum error-correcting codes~\cite{BDH2006,Dcc-EAqecc}, while symplectic
self-orthogonal codes correspond bijectively to quantum stabilizer
codes~\cite{KKKS2006}. Beyond their intrinsic interest, comparing the three
inner products side by side reveals a striking dichotomy that we trace to
the Witt classification of the underlying form.

\paragraph{Contribution.}
In this paper, we extend the BBO framework to the Hermitian and symplectic inner
products and undertake a comparative analysis of all three settings.
Our main contributions are:

\begin{enumerate}
\item[(i)] (\emph{Hermitian hull}) We derive a closed-form expression for the Hermitian ratio factor $\alpha^H$ and prove algebraically that $\alpha^H \geq q/(q+1) \geq 2/3$ in all valid cases.
The strict inequality $\alpha^H > 1$ holds outside an explicit boundary family ($\ell = 0$, $n$ even, $k \in \{1, n-1\}$). Both the lower bound and the exceptional set are strictly stronger than their Euclidean counterparts (Theorem~\ref{thm:main}).

\item[(ii)] (\emph{Symplectic hull}) We derive the analogous closed form
$\alpha^S_{2n,k,\ell,q}$ for the symplectic case. Unlike the Hermitian case, $\alpha^S$ is asymptotically $1/q^2$ (strictly less than $1$), so the natural BBO-style monotonicity inequality fails for a substantial range of parameters at $q = 2$.
We characterize the exceptional family precisely: failure occurs exactly on
$\{(2n, k, 0, 2) : 4 \leq k \leq 2n - 4,\ k \text{ even}\}$, with monotonicity restored for $\ell \geq 1$ or for $q \geq 3$
(Theorems~\ref{thm:main-symp} and~\ref{thm:exception-symp}).

\item[(iii)] (\emph{Comparison and applications}) We juxtapose the three
settings in a comparative table linking the qualitative differences in
$\alpha^*$ to the orbit count of the corresponding classical group action,
and translate Theorems~\ref{thm:main} and~\ref{thm:main-symp} into
monotonicity statements for entanglement-assisted quantum codes via the
Guenda-Jitman-Gulliver~\cite{Dcc-EAqecc} and
Wilde-Brun~\cite{WildeBrun2008} constructions, respectively
(Table~\ref{tab:comparison};
Corollaries~\ref{cor:eaqecc-monotone} and~\ref{cor:eaqecc-symp-monotone}).
\end{enumerate}

\paragraph{Organization.}
Section~\ref{sec:prelim} fixes notation. Sections~\ref{sec:main}
and~\ref{sec:symp} establish the Hermitian and symplectic ratio decompositions
respectively. Section~\ref{sec:comp-app} presents the comparison and
applications, and Section~\ref{sec:concl} concludes with open problems.

\section{Preliminaries} \label{sec:prelim}

\subsection{Linear codes and generator matrices}\label{sec:prelim-codes}

Let $\F_q$ denote the finite field with $q$ elements, where $q$ is a prime power.
A \emph{linear $[n, k]_q$ code} $C$ is a $k$-dimensional subspace of $\F_q^n$.
The integer $n$ is the \emph{length}, $k$ is the \emph{dimension}, and the elements of $C$
are called \emph{codewords}. A \emph{generator matrix} of $C$ is a $k \times n$ matrix
$G$ over $\F_q$ whose rows form a basis of $C$; equivalently, $C = \{\mathbf{u} G : \mathbf{u} \in \F_q^k\}$.
The \emph{Hamming weight} of a codeword $\mathbf{c} \in C$ is the number of nonzero coordinates,
and the \emph{minimum distance} of $C$ is $d(C) = \min\{\mathrm{wt}(\mathbf{c}) : \mathbf{c} \in C \setminus \{\mathbf{0}\}\}$.
We sometimes write $[n, k, d]_q$ for an $[n, k]_q$ code of minimum distance $d$.

\subsection{Bilinear forms and dual codes}\label{sec:prelim-dual}

A \emph{bilinear form} on $\F_q^n$ is a $\F_q$-bilinear map $B : \F_q^n \times \F_q^n \to \F_q$.
Three standard examples will concern us:
\begin{itemize}
\item the \emph{Euclidean inner product} $\langle \mathbf{x}, \mathbf{y} \rangle = \sum_{i=1}^n x_i y_i$
      on $\F_q^n$, which is symmetric;
\item the \emph{Hermitian inner product} $\langle \mathbf{x}, \mathbf{y} \rangle_H = \sum_{i=1}^n x_i y_i^q$
      on $\F_{q^2}^n$, which is sesquilinear with respect to the Frobenius automorphism $x \mapsto x^q$;
\item the \emph{symplectic inner product} on $\F_q^{2n}$ defined by
      \[
        \langle \mathbf{x}, \mathbf{y} \rangle_S = \sum_{i=1}^{n} (x_i y_{n+i} - x_{n+i} y_i),
      \]
      which is alternating: $\langle \mathbf{x}, \mathbf{x} \rangle_S = 0$ for all $\mathbf{x}$.
      Equivalently, the symplectic Gram matrix is $\Omega_n = \begin{pmatrix} O & I_n \\ -I_n & O \end{pmatrix}$.
\end{itemize}
For a linear code $C$ over $\F_q$ (resp.\ $\F_{q^2}$), the \emph{Euclidean dual},
\emph{Hermitian dual}, and \emph{symplectic dual} are defined respectively by
\begin{align*}
  C^\perp &= \{\mathbf{y} \in \F_q^n : \langle \mathbf{x}, \mathbf{y} \rangle = 0 \ \forall \mathbf{x} \in C\}, \\
  C^{\perp_H} &= \{\mathbf{y} \in \F_{q^2}^n : \langle \mathbf{x}, \mathbf{y} \rangle_H = 0 \ \forall \mathbf{x} \in C\}, \\
  C^{\perp_S} &= \{\mathbf{y} \in \F_q^{2n} : \langle \mathbf{x}, \mathbf{y} \rangle_S = 0 \ \forall \mathbf{x} \in C\},
\end{align*}
where the symplectic dual is defined for an $[2n, k]_q$ code $C \subseteq \F_q^{2n}$.
Each dual code is itself a linear code of dimension $n - k$ (resp.\ $2n - k$ in the symplectic case),
since all three forms are non-degenerate.
For a generator matrix $G$ of $C$, the dual is generated by any matrix $H$ satisfying
$G H^T = O$ (Euclidean), $G \overline{H}^T = O$ (Hermitian), or $G \Omega_n H^T = O$ (symplectic),
where $\overline{H} = (h_{ij}^q)$ is the entrywise $q$-th power
and $\Omega_n$ is the symplectic Gram matrix.

\subsection{Hull, LCD codes, and self-orthogonal codes}\label{sec:prelim-hull}

For each of the three inner products, the \emph{hull} is defined as the intersection of the code
with its dual:
\begin{align*}
  \mathrm{Hull}(C) &:= C \cap C^\perp & &\text{(Euclidean),}\\
  \HHull(C) &:= C \cap C^{\perp_H} & &\text{(Hermitian),}\\
  \mathrm{Hull}_S(C) &:= C \cap C^{\perp_S} & &\text{(symplectic).}
\end{align*}
The hull dimension interpolates between two extremes: $C$ is called a
\emph{linear complementary dual (LCD)} code if its hull is trivial
(introduced by Massey~\cite{Massey1992}), and \emph{self-orthogonal} if $C$
is contained in the appropriate dual.
Binary LCD codes have applications to side-channel attack countermeasures~\cite{CarletGuilley2016}
and decoding complexity, while self-orthogonal codes underlie the construction of
quantum error-correcting codes via the stabilizer and entanglement-assisted frameworks~\cite{KKKS2006,Dcc-EAqecc}.

The hull dimension can be read off from any generator matrix:
\begin{proposition}\label{prop:hull-dim}
Let $C$ be a linear code with generator matrix $G$.
\begin{enumerate}
\item \cite[Proposition 3.2]{Dcc-EAqecc} If $C$ is an $[n, k]_{q^2}$ code, then
$\dim(\HHull(C)) = k - \mathrm{rank}(G \overline{G}^T)$.
In particular, $C$ is Hermitian LCD if and only if $G \overline{G}^T$ is nonsingular,
and Hermitian self-orthogonal if and only if $G \overline{G}^T = O$.
\item If $C$ is an $[2n, k]_q$ code, then
$\dim(\mathrm{Hull}_S(C)) = k - \mathrm{rank}(G \Omega_n G^T)$.
In particular, $C$ is symplectic LCD if and only if $G \Omega_n G^T$ is nonsingular,
and symplectic self-orthogonal if and only if $G \Omega_n G^T = O$. Since $G \Omega_n G^T$
is alternating, $\mathrm{rank}(G \Omega_n G^T)$ is even, hence
$k - \dim(\mathrm{Hull}_S(C))$ is even.
\end{enumerate}
\end{proposition}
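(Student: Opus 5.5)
Both parts follow from one linear-algebra identity: the hull is the image, under the coordinate map $\mathbf{u}\mapsto \mathbf{u}G$, of the kernel of the Gram matrix of the form restricted to $C$. Part (i) is cited from \cite[Proposition 3.2]{Dcc-EAqecc}, but I would prove both parts uniformly. Let $B$ denote the relevant form ($\langle\cdot,\cdot\rangle_H$ or $\langle\cdot,\cdot\rangle_S$), and let $M$ be its Gram matrix on the rows of $G$: $M = G\overline{G}^T$ in the Hermitian case and $M = G\Omega_n G^T$ in the symplectic case. Since the rows $\mathbf{g}_1,\dots,\mathbf{g}_k$ of $G$ are linearly independent, the map $\mathbf{u}\mapsto \mathbf{u}G$ is a bijection $\F^k\to C$.

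\textbf{Step 1: identify the hull with a kernel.} A codeword $\mathbf{x}=\mathbf{u}G$ lies in the dual exactly when $B(\mathbf{g}_i,\mathbf{x})=0$ for every $i$, since the rows span $C$. The order of arguments matters: $\mathbf{x}\in C^{\perp}$ requires $B(\mathbf{c},\mathbf{x})=0$ for all $\mathbf{c}\in C$.

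In the Hermitian case,
\[
\langle \mathbf{g}_i,\mathbf{u}G\rangle_H=\sum_j (G\overline{G}^T)_{ij}\,u_j^q .
\]
So the condition is $G\overline{G}^T\,\overline{\mathbf{u}}^T=\mathbf{0}$. The map $\mathbf{u}\mapsto\overline{\mathbf{u}}$ is a bijection preserving $\F_q$-dimension of solution sets; better, it is a semilinear bijection carrying subspaces to subspaces of the same $\F_{q^2}$-dimension. Hence $\dim\HHull(C)=\dim\ker(G\overline{G}^T)=k-\mathrm{rank}(G\overline{G}^T)$.

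In the symplectic case, $\langle \mathbf{g}_i,\mathbf{u}G\rangle_S=(G\Omega_nG^T\mathbf{u}^T)_i$. This gives $\dim\mathrm{Hull}_S(C)=k-\mathrm{rank}(G\Omega_nG^T)$ directly.

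\textbf{Step 2: the extreme cases.} LCD means the kernel is zero, i.e.\ $M$ is nonsingular. Self-orthogonal means the hull has dimension $k$, i.e.\ $\mathrm{rank}\,M=0$, i.e.\ $M=O$.

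\textbf{Step 3: parity in the symplectic case.} First, $M=G\Omega_nG^T$ is alternating. Its diagonal entries are $\langle\mathbf{g}_i,\mathbf{g}_i\rangle_S=0$, and $M^T=G\Omega_n^TG^T=-M$. I would stress the zero diagonal because in characteristic $2$ skew-symmetry alone does not force it. Next, invoke the standard fact that an alternating matrix has even rank. It is congruent to a direct sum of hyperbolic blocks $\begin{pmatrix}0&1\\-1&0\end{pmatrix}$ and zeros; this follows by induction, picking $i,j$ with $M_{ij}\neq0$ and splitting off that pair. Therefore $k-\dim\mathrm{Hull}_S(C)$ is even.

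\textbf{Main obstacle.} The only delicate point is the Frobenius twist in the Hermitian case. One must justify that $\{\mathbf{u}:G\overline{G}^T\overline{\mathbf{u}}^T=0\}$ has the same $\F_{q^2}$-dimension as $\ker(G\overline{G}^T)$. This holds because conjugation maps the latter bijectively onto the former and sends $\F_{q^2}$-subspaces to $\F_{q^2}$-subspaces, so a basis maps to a basis.
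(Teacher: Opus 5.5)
Your proof is correct. The paper does not actually prove this proposition. Part (i) is imported from Guenda--Jitman--Gulliver, and the rank identity in part (ii) is simply asserted as the symplectic analogue. The only argument the paper gives is the one-line parity remark, which rests on the same even-rank property of alternating matrices that you use in Step 3.

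Your kernel-of-the-Gram-matrix argument therefore supplies a self-contained proof that covers both forms at once. It also makes explicit two points the paper leaves implicit:
\begin{itemize}
\item The Frobenius twist in the Hermitian case is harmless. Since $\mathbf{u}\mapsto\overline{\mathbf{u}}$ is a semilinear bijection, the solution set of $G\overline{G}^T\overline{\mathbf{u}}^T=\mathbf{0}$ is an $\F_{q^2}$-subspace of the same dimension as $\ker(G\overline{G}^T)$.
\item The zero diagonal of $G\Omega_nG^T$ is what the parity claim genuinely requires when $q$ is even. Skew-symmetry alone permits odd rank in characteristic $2$.
\end{itemize}
The citation route buys the paper brevity. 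Your route buys independence from the external reference and a uniform treatment in which the symplectic rank formula is proved rather than taken by analogy.
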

For Hermitian, $0 \leq \dim(\HHull(C)) \leq \min(k, n-k)$.
For symplectic, $0 \leq \dim(\mathrm{Hull}_S(C)) \leq \min(k, 2n-k)$ with the
additional constraint that $k - \dim(\mathrm{Hull}_S(C))$ is even.

\subsection{The Euclidean prototype: Bouyuklieva-Bouyukliev-\"Ozbudak}\label{sec:prelim-bbo}

The starting point of our investigation is the following result of Bouyuklieva,
Bouyukliev, and \"Ozbudak~\cite{BBOz2026}, which gave the first systematic study of
how the count $A_{n,k,\ell,q}$ of $[n,k]_q$ Euclidean codes with hull dimension $\ell$
varies with $\ell$.

\begin{theorem}[\cite{BBOz2026}, Lemma~8]\label{thm:BBO-odd}
Let $q$ be a power of an odd prime, $n$ and $k$ be positive integers with
$k \leq n/2$, and $\ell$ be an integer such that $0 \leq \ell \leq k - 1$. Then
\[
  A_{n,k,\ell,q} = \alpha_{n,k,\ell,q}\,(q^{\ell+1} - 1)\,A_{n,k,\ell+1,q},
\]
where
\[
  \alpha_{n,k,\ell,q} = \begin{cases}
    \displaystyle\frac{q^{n/2-1}}{q^{n/2-1} + \eta((-1)^{n/2})\,q^\ell}
      & \text{if $n$ is even, $k - \ell$ is odd}, \\[8pt]
    \displaystyle\frac{q^{n-k-\ell}}{q^{n-k-\ell} - 1}
      & \text{if $n$ is odd, $k - \ell$ is odd}, \\[8pt]
    \displaystyle\frac{q^{k-\ell}}{q^{k-\ell} - 1}
      & \text{if $n$ is odd, $k - \ell$ is even}, \\[8pt]
    \displaystyle\frac{q^{n/2-\ell}\bigl(q^{n/2-\ell} + \eta((-1)^{n/2})\bigr)}
                       {(q^{n-k-\ell}-1)(q^{k-\ell}-1)}
      & \text{if $n$ is even, $k - \ell$ is even}.
  \end{cases}
\]
If $k - \ell$ is odd, $n \equiv 0 \pmod 4$ or
$n \equiv 2 \pmod 4$ and $q \equiv 1 \pmod 4$, then
$\alpha_{n,k,\ell,q} \geq \tfrac{1}{2}$ with equality when $k = n/2$ and
$\ell = k - 1$. In all other cases $\alpha_{n,k,\ell,q} > 1$.
\end{theorem}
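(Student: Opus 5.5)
The plan is to start from a closed-form mass formula for $A_{n,k,\ell,q}$, namely that of Li, Shi, and Ling~\cite{LiShiLing2024} for odd $q$. It is derived from the orbit decomposition of the Grassmannian under the orthogonal group $O(n,q)$. A $k$-dimensional code $C$ with hull dimension $\ell$ is determined by its radical $R=C\cap C^\perp$ together with the non-degenerate quotient $C/R$ of dimension $k-\ell$. Counting then factors as:
\begin{itemize}
\item the number of totally isotropic $\ell$-subspaces $R$;
\item times the number of non-degenerate $(k-\ell)$-subspaces of $R^\perp/R$ of each isometry type;
\item times a lifting factor $q^{\ell(k-\ell)}$ accounting for complements of $R$ inside $C$.
\end{itemize}
The space $R^\perp/R$ is a non-degenerate quadratic space of dimension $n-2\ell$ whose Witt type is that of $\F_q^n$. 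Its discriminant is governed by $\eta((-1)^{n/2})$ when $n$ is even.

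With this formula I take the quotient $A_{n,k,\ell,q}/A_{n,k,\ell+1,q}$ and cancel common factors. The number of totally isotropic $\ell$-subspaces in a space of Witt index $m$ is a product $\prod_{i}(q^{m-i}-1)(q^{m-i-1+\epsilon}+1)/(q^{i+1}-1)$. So the ratio of consecutive isotropic counts produces the factor $1/(q^{\ell+1}-1)$ times a term coming from the isotropic count. The ratio of non-degenerate subspace counts in dimensions $n-2\ell$ versus $n-2\ell-2$, with sizes $k-\ell$ versus $k-\ell-1$, contributes the remaining factor. The computation splits by the parity of $n$, which decides whether the ambient space is odd-dimensional or of type $\pm$. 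It also splits by the parity of $k-\ell$, which decides whether $C/R$ and its complement are odd- or even-dimensional, and so which orthogonal group orders appear. In each of the four cases I track $|O^\pm_{2m}|$ and $|O_{2m+1}|$, keeping in mind that an even-dimensional non-degenerate subspace of an odd-dimensional space can have either type. This is where the sum over types collapses to the clean expressions. I expect this to be the main obstacle: in the case $n$ even, $k-\ell$ even, the two types of $C/R$ must be summed and combined into $q^{n/2-\ell}(q^{n/2-\ell}+\eta)$. After cancellation, the remaining factor after pulling out $(q^{\ell+1}-1)$ is exactly the displayed $\alpha$.

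For the bounds, I handle the cases separately.
\begin{itemize}
\item In the two odd-$n$ cases, $\alpha=q^t/(q^t-1)>1$ is immediate.
\item For $n$ even and $k-\ell$ even, $(q^{n-k-\ell}-1)(q^{k-\ell}-1)<q^{n-2\ell}$, and the numerator is at least $q^{n/2-\ell}(q^{n/2-\ell}-1)$. A direct comparison, using $k\le n/2$ and $k-\ell\ge 2$, gives $\alpha>1$.
\item For $n$ even and $k-\ell$ odd, $\alpha>1$ exactly when $\eta((-1)^{n/2})=-1$. Otherwise $\alpha=q^{n/2-1}/(q^{n/2-1}+q^\ell)$. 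This is decreasing in $\ell$ and minimized at the largest admissible $\ell=k-1\le n/2-1$. There it equals $1/2$ exactly when $k=n/2$ and $\ell=k-1$.
\end{itemize}
Finally, $\eta((-1)^{n/2})=1$ holds precisely when $n\equiv0\pmod 4$, or when $n\equiv 2\pmod 4$ and $-1$ is a square, that is, $q\equiv1\pmod4$.
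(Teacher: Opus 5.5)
Your route is the one the paper attributes to BBO, whose Lemma~8 the paper only quotes: a closed mass formula, the quotient $A_{n,k,\ell,q}/A_{n,k,\ell+1,q}$, a four-way split by the parities of $n$ and $k-\ell$, then bounds case by case. It can be completed, but the counting decomposition you state is wrong in one factor, and used as written it would not give the displayed $\alpha$.

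\textbf{The lifting factor.} There is no factor $q^{\ell(k-\ell)}$. Since $R=C\cap C^\perp$ forces $C\subseteq R^\perp$, the map $C\mapsto(R,\,C/R)$ is a bijection onto pairs consisting of a totally isotropic $\ell$-space $R$ and a non-degenerate $(k-\ell)$-subspace $W$ of $R^\perp/R$. The inverse sends $W$ to its full preimage in $R^\perp$, whose radical is exactly $R$. Hence $A_{n,k,\ell,q}=\#\{R\}\cdot\nu(R^\perp/R,\,k-\ell)$, where $\nu$ counts non-degenerate subspaces.

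Complements of $R$ in $C$ enter only if you count pairs $(R,U)$ with $U$ a non-degenerate complement, as in the LSL construction. Then each $C$ arises $q^{\ell(k-\ell)}$ times, so you must divide, not multiply. The Hermitian analogue is the $q^{2k_0}$ in the denominator of each $F_i$ in Theorem~\ref{thm:LSL}.

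Multiplying instead puts a spurious factor $q^{\ell(k-\ell)-(\ell+1)(k-\ell-1)}=q^{2\ell+1-k}$ into the quotient. For $(n,k,q)=(4,2,3)$ and $\ell=0,1,2$, your counts would be $90,96,8$, summing to $194\neq\binom{4}{2}_3=130$. The true values are $90,32,8$.

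\textbf{The rest of the plan.} With that factor removed the plan works, and the type-summation you single out as the main obstacle collapses. Let $R^\perp/R$ have dimension $2M$ and type $\eta$, and let $r+s=M$. Summing $|O^\eta_{2M}|/(|O^\tau_{2r}|\,|O^{\tau\eta}_{2s}|)$ over $\tau=\pm$ gives $\nu=q^{2rs}\binom{M}{r}_{q^2}$, which is independent of $\eta$. So $\eta$ enters only through the $(q^M-\eta)(q^{M-1}+\eta)$ nonzero isotropic vectors of $R^\perp/R$, which govern $\#R_\ell/\#R_{\ell+1}$. Writing $V_{2M}$ for a $2M$-dimensional space of type $\eta$, one has $\nu(V_{2M},2r)/\nu(V_{2M-2},2r-1)=q^M(q^{2M}-1)(q^{M-1}+\eta)/\bigl((q^{2r}-1)(q^{2s}-1)\bigr)$. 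Multiplying by $(q^{\ell+1}-1)$ and dividing by the isotropic count gives $(q^{\ell+1}-1)\,q^M(q^M+\eta)/\bigl((q^a-1)(q^b-1)\bigr)$ with $M=n/2-\ell$, $a=k-\ell$, $b=n-k-\ell$, as claimed.

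\textbf{Three smaller repairs.}
\begin{enumerate}
\item The isotropic quotient $\#R_\ell/\#R_{\ell+1}$ contributes $q^{\ell+1}-1$ to the numerator, not $1/(q^{\ell+1}-1)$.
\item When $n$ and $k-\ell$ are both even, write $\alpha=N/D$. Your two estimates, $D<q^{n-2\ell}$ and $N\ge q^{n-2\ell}-q^{n/2-\ell}$, do not chain. You need the exact bound $N-D\ge q^{a}+q^{b}-q^{(a+b)/2}-1>0$, using $b\ge(a+b)/2$ (from $k\le n/2$) and $a\ge 2$.
\item When $\eta((-1)^{n/2})=-1$, $k=n/2$ and $\ell=k-1$, the denominator of $\alpha$ vanishes because there are no self-dual codes, i.e.\ $A_{n,k,\ell+1,q}=0$. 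That point must be excluded from ``$\alpha>1$''.
\end{enumerate}
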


\begin{theorem}[\cite{BBOz2026}, Lemma~10]\label{thm:BBO-even}
Let $q$ be a power of $2$, $n$ and $k$ be positive integers with $k \leq n/2$,
and $\ell$ be an integer such that $0 \leq \ell \leq k - 1$. Then
\[
  A_{n,k,\ell,q} = \alpha_{n,k,\ell,q}\,(q^{\ell+1} - 1)\,A_{n,k,\ell+1,q},
\]
where
\[
  \alpha_{n,k,\ell,q} = \begin{cases}
    \displaystyle\frac{q^{n-\ell-1}}{q^{n-\ell-1} - 1}
      & \text{if $n$ is even, $k - \ell$ is odd}, \\[8pt]
    \displaystyle\frac{q^{n-k-\ell}}{q^{n-k-\ell} - 1}
      & \text{if $n$ is odd, $k - \ell$ is odd}, \\[8pt]
    \displaystyle\frac{q^{k-\ell}}{q^{k-\ell} - 1}
      & \text{if $n$ is odd, $k - \ell$ is even}, \\[8pt]
    \displaystyle\frac{q^{n-\ell}-1}{q^\ell(q^{n-k-\ell}-1)(q^{k-\ell}-1)}
      & \text{if $n$ is even, $k - \ell$ is even}.
  \end{cases}
\]
In all cases $\alpha_{n,k,\ell,q} > 1$.
\end{theorem}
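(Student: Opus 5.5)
The plan is to start from the closed-form mass formula for binary-characteristic Euclidean hulls due to Li, Shi, and Ling~\cite{LiShiLing2024}, which we take as given. It expresses $A_{n,k,\ell,q}$ in the form
\[
  A_{n,k,\ell,q} \;=\; \binom{n'}{\ell}_{q}\text{-type factor}\times N_{\mathrm{LCD}}(n-2\ell,\,k-\ell),
\]
where the first factor counts choices of the hull $\ell$-space and the second counts LCD complements in the quotient. Concretely, a code with hull $H$ of dimension $\ell$ corresponds to a totally isotropic $\ell$-space $H$ together with an LCD $(k-\ell)$-subspace of $H^\perp/H$. That quotient is a nondegenerate symmetric space of dimension $n-2\ell$.

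For $q$ even, the type of $H^\perp/H$ depends only on two things: whether the all-ones vector behaves like the radical of the associated quadratic-vs-alternating structure, and the parity of $n$. The counts of totally isotropic subspaces and of LCD subspaces in such spaces are explicit products of $q$-binomial-like factors, with the alternating/nonalternating dichotomy encoded by parity. I will therefore write $A_{n,k,\ell,q}$ and $A_{n,k,\ell+1,q}$ from these formulas in each of the four parity cases ($n$ even/odd, $k-\ell$ even/odd). I then form the quotient $A_{n,k,\ell,q}/\bigl((q^{\ell+1}-1)A_{n,k,\ell+1,q}\bigr)$.

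Most factors telescope. The ratio of the isotropic-space counts yields $(q^{\ell+1}-1)^{-1}$ times a factor like $q^{\ell}$ or $(q^{n-\ell-1}-1)$, depending on the number of totally isotropic vectors available at the next step. The ratio of the LCD counts in dimensions $(n-2\ell, k-\ell)$ versus $(n-2\ell-2, k-\ell-1)$ yields the remaining rational factor. Simplifying each case gives the four displayed expressions for $\alpha_{n,k,\ell,q}$. The main obstacle is this bookkeeping: keeping straight in which cases the quotient form is alternating (in characteristic $2$, a symmetric nondegenerate form is alternating iff every vector is self-orthogonal). This flips between the parities of $k-\ell$ when $n$ is even, and it is exactly what produces the asymmetric formulas in the first and fourth cases. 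I would verify each case against small parameters ($n\le 4$, $q=2$) by direct enumeration.

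Finally, I prove $\alpha>1$ case by case. Cases one to three have the form $x/(x-1)$ with $x=q^m$ and $m\ge 1$. For case one, $m=n-\ell-1\ge 1$ since $\ell\le k-1\le n/2-1$; cases two and three are similar, since $n-k-\ell\ge 1$ and $k-\ell\ge1$. So $\alpha>1$ there. For case four, $k-\ell\ge 2$ and $n-k-\ell\ge 2$. Set $a=k-\ell$ and $b=n-k-\ell$, so that $n-\ell=a+b+\ell$. It then suffices to show
\[
  q^{a+b+\ell}-1 > q^{\ell}(q^a-1)(q^b-1) = q^{a+b+\ell}-q^{a+\ell}-q^{b+\ell}+q^{\ell},
\]
which is equivalent to $q^{a+\ell}+q^{b+\ell} > q^\ell+1$. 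This holds trivially, which completes the argument.
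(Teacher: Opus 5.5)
\textbf{Verdict: genuine gap.} The paper gives no proof of this statement; it quotes it from BBO, whose argument it describes as a four-case manipulation of Li--Shi--Ling's closed forms. Your proposal therefore has to stand on its own.

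\textbf{What is correct.} Your last step works. The $x/(x-1)$ argument and the reduction of case four to $q^{a+\ell}+q^{b+\ell}>q^{\ell}+1$ correctly give $\alpha_{n,k,\ell,q}>1$. Your bijection between codes with hull $H$ and LCD $(k-\ell)$-subspaces of $H^\perp/H$ is also correct.

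\textbf{The gap.} The four formulas for $\alpha$ are the entire content of the lemma. You assert them (``simplifying each case gives the four displayed expressions'') rather than derive them. The single-product structure you read off from the bijection is also not right. In characteristic $2$ one has $x\cdot x=(\mathbf{1}\cdot x)^2$. So for self-orthogonal $H$, the induced form on $H^\perp/H$ is alternating exactly when $\mathbf{1}\in H$. This is a property of $H$, possible only for $n$ even; it does not ``flip with the parity of $k-\ell$''. Hence for $n$ even
\[
A_{n,k,\ell,q}=T_1(\ell)\,N_{\mathrm{alt}}(n-2\ell,k-\ell)+T_0(\ell)\,N_{\mathrm{na}}(n-2\ell,k-\ell).
\]
Here $T_1(\ell)$ and $T_0(\ell)$ count self-orthogonal $\ell$-spaces containing and avoiding $\mathbf{1}$, respectively. $N_{\mathrm{alt}}$ and $N_{\mathrm{na}}$ count nondegenerate subspaces of a symplectic and of a non-alternating space, respectively. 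This is a sum of two products, so in cases one and four there is nothing to telescope in the way you describe.

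\textbf{What you would need to close it.} The sum does collapse, but only through facts absent from your plan.
\begin{itemize}
\item The first term vanishes when $k-\ell$ is odd, since a symplectic space has no odd-dimensional nondegenerate subspaces.
\item When $k-\ell$ is even you need the identity $N_{\mathrm{alt}}(2r,2s)=N_{\mathrm{na}}(2r,2s)$. One way to get it is orbit counting, using $|O^{\mathrm{na}}(2r+1,q)|=|\mathrm{Sp}(2r,q)|$ and $|O^{\mathrm{na}}(2r,q)|=q^{2r-1}|\mathrm{Sp}(2r-2,q)|$, and summing over the three isometry types of the pair $(W,W^\perp)$. The same issue of whether the even-dimensional summand is alternating reappears inside $N_{\mathrm{na}}$ itself, even for $n$ odd.
\item You need explicit hull counts, e.g.\ $T_0(\ell)=q^{\ell}G(\ell)$ and $T_1(\ell)=G(\ell-1)$. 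Here $G(j)$ is the number of totally isotropic $j$-spaces in the symplectic space $\mathbf{1}^\perp/\langle\mathbf{1}\rangle$.
\item The simplification $T_0(\ell+1)+T_1(\ell+1)=G(\ell)\,(q^{n-\ell-1}-1)/(q^{\ell+1}-1)$ is exactly what makes case one independent of $k$.
\end{itemize}
With these ingredients your route does reproduce all four stated formulas. It would be a more structural proof than algebra on closed forms. As written, though, none of it is present, and checking small parameters by enumeration cannot replace the general computation.
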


These four-case decompositions reflect the Witt classification of the standard
Euclidean form on $\F_q^n$, in particular the quadratic-residue character
$\eta((-1)^{n/2})$ for $n$ even and $q$ odd. The case $q \equiv 3 \pmod 4$,
$n \equiv 2 \pmod 4$, $k - \ell$ odd is the unique regime where the natural
inequality $A_{n,k,\ell,q} > (q^{\ell+1}-1) A_{n,k,\ell+1,q}$ may fail; in
that case BBO show $A_{n,k,\ell,q} \geq \tfrac{1}{2}(q^{\ell+1}-1) A_{n,k,\ell+1,q}$
(\cite[Theorem~12]{BBOz2026}).

In this paper, we aim to extend
Theorems~\ref{thm:BBO-odd} and~\ref{thm:BBO-even} to the Hermitian
(Section~\ref{sec:main}) and symplectic (Section~\ref{sec:symp}) settings,
in each case deriving a closed-form expression for the ratio factor and an
exact characterization of the parameter set where the BBO-style inequality fails.
In Section~\ref{sec:comp-app} we juxtapose the three results, observing
that the qualitative behavior of hull-monotonicity differs markedly across
the three classical inner products and tracing this difference to the orbit
structure of the corresponding classical group action.

\subsection{Counting linear codes by hull dimension}\label{sec:prelim-counts}

For the Euclidean, Hermitian, and symplectic inner products, we write
\begin{align*}
  A_{n,k,\ell,q}    &= |\{C \subseteq \F_q^n : \dim C = k,\ \dim \mathrm{Hull}(C) = \ell\}|, \\
  A^H_{n,k,\ell,q}  &= |\{C \subseteq \F_{q^2}^n : \dim C = k,\ \dim \HHull(C) = \ell\}|, \\
  A^S_{2n,k,\ell,q} &= |\{C \subseteq \F_q^{2n} : \dim C = k,\ \dim \mathrm{Hull}_S(C) = \ell\}|.
\end{align*}
These are the central objects of our study.

\subsection{Mass formulas for Hermitian and symplectic hull dimension}\label{sec:prelim-mass}

For a prime power $Q$ and integers $0 \leq k \leq n$, the
\emph{Gaussian binomial coefficient}
\[
  \begin{bmatrix} n \\ k \end{bmatrix}_Q
  := \prod_{i=0}^{k-1} \frac{Q^{n-i} - 1}{Q^{i+1} - 1}
\]
counts the number of $k$-dimensional subspaces of an $n$-dimensional vector
space over $\F_Q$. It is a polynomial in $Q$ with non-negative integer
coefficients and satisfies $\binom{n}{k}_Q = \binom{n}{n-k}_Q$. We use this
quantity below as the total count of $[n,k]_Q$ codes against which mass
formulas are verified.

The exact value of $A^H_{n,k,\ell,q}$ was determined by Li et al.\
via classical group action on the set of LCD codes.

\begin{theorem}[{\cite[Theorem~3.11]{LSL2024}}]\label{thm:LSL}
Let $n,k,\ell$ be positive integers with $\ell \leq k \leq n - \ell$.
Let $k_0 = k - \ell$.
Then
\[
  A^H_{n,k,\ell,q} = \begin{cases}
    \displaystyle\left(\prod_{i=1}^{\ell} A_{n,k_0,i}\right) \cdot L(n,k_0,q), & \text{if } n - k_0 \text{ is odd}, \\[10pt]
    \displaystyle\left(\prod_{i=1}^{\ell} B_{n,k_0,i}\right) \cdot L(n,k_0,q), & \text{if } n - k_0 \text{ is even},
  \end{cases}
\]
where
\[
  L(n,k_0,q) = q^{k_0(n-k_0)} \prod_{j=1}^{k_0} \frac{q^{n-k_0+j} - (-1)^{n-k_0+j}}{q^j - (-1)^j}
\]
is the number of Hermitian LCD $[n,k_0]_{q^2}$ codes, and
\begin{align*}
  A_{n,k_0,i} &= \frac{(q^{n-k_0-2i+2}+1)(q^{n-k_0-2i+1}-1)}{q^{2k_0}(q^{2i}-1)}, \\
  B_{n,k_0,i} &= \frac{(q^{n-k_0-2i+2}-1)(q^{n-k_0-2i+1}+1)}{q^{2k_0}(q^{2i}-1)}.
\end{align*}
\end{theorem}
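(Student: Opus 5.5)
The plan is to prove the formula in Theorem~\ref{thm:LSL} by a direct orbit-counting argument in the unitary geometry of $V=\F_{q^2}^n$, and then to rearrange the resulting closed form into the stated product. The first step is a structural bijection. A $k$-dimensional code $C$ with $\dim\HHull(C)=\ell$ determines its hull $H=\HHull(C)$, which is a totally isotropic $\ell$-subspace, and $H\subseteq C\subseteq H^{\perp_H}$ holds. The Hermitian form descends to a non-degenerate Hermitian form on $H^{\perp_H}/H$, which has dimension $n-2\ell$.

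\textbf{Bijection.} We check that $C/H$ is then a $k_0$-dimensional subspace of $H^{\perp_H}/H$ whose radical is trivial, i.e.\ it is LCD there. Conversely, fix a totally isotropic $H$ and an LCD $W\subseteq H^{\perp_H}/H$. The preimage $C$ of $W$ is the unique code with $H\subseteq C\subseteq H^{\perp_H}$ and $C/H=W$, and its hull is exactly $H$. This gives
\[
A^H_{n,k,\ell,q}=N_{\mathrm{iso}}(n,\ell)\cdot L(n-2\ell,k_0,q),
\]
where $N_{\mathrm{iso}}(n,\ell)$ is the number of totally isotropic $\ell$-subspaces of the $n$-dimensional unitary space.

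\textbf{The two counts.} For $L(m,k_0,q)$, Witt's theorem says all non-degenerate $k_0$-subspaces form one orbit of $U_m(q)$, with stabilizer $U_{k_0}(q)\times U_{m-k_0}(q)$. Using
\[
|U_m(q)|=q^{m(m-1)/2}\prod_{j=1}^m\bigl(q^j-(-1)^j\bigr),
\]
the orbit size simplifies to the stated formula for $L$. For $N_{\mathrm{iso}}(n,\ell)$, I would count ordered bases of isotropic flags iteratively, or use the stabilizer of a totally isotropic subspace (a parabolic subgroup). The number of nonzero isotropic vectors in a unitary $m$-space is $(q^m-(-1)^m)(q^{m-1}-(-1)^{m-1})$. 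This yields
\[
N_{\mathrm{iso}}(n,\ell)=\prod_{i=0}^{\ell-1}\frac{(q^{n-2i}-(-1)^{n-2i})(q^{n-2i-1}-(-1)^{n-2i-1})}{q^{2(i+1)}-1}.
\]

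\textbf{Rewriting into the stated product.} It remains to show
\[
N_{\mathrm{iso}}(n,\ell)\,L(n-2\ell,k_0,q)=L(n,k_0,q)\prod_{i=1}^{\ell}A_{n,k_0,i}
\]
(respectively with $B_{n,k_0,i}$). I would do this by telescoping in $\ell$: compare the ratio of the left side at $\ell$ and at $\ell-1$ (with $k_0$ fixed) against the single factor $A_{n,k_0,\ell}$ or $B_{n,k_0,\ell}$.

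\begin{itemize}
\item The powers of $q$ contribute $q^{k_0(n-2\ell-k_0)}/q^{k_0(n-2\ell+2-k_0)}=q^{-2k_0}$, which matches the $q^{2k_0}$ in the denominator.
\item In the product part of $L$, passing from $n-2\ell+2$ to $n-2\ell$ cancels two factors, the top ones with $j=k_0-1,k_0$. These are $q^{n-2\ell+2}-(-1)^{n-2\ell+2}$ and $q^{n-2\ell+1}-(-1)^{n-2\ell+1}$ shifted by $k_0$.
\item The new isotropic-count factor contributes numerator terms in $q^{n-2\ell+2}$ and $q^{n-2\ell+1}$, together with the denominator $q^{2\ell}-1$.
\end{itemize}

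The signs $(-1)^{n-k_0}$ decide whether the surviving factor is $(q^{n-k_0-2\ell+2}+1)(q^{n-k_0-2\ell+1}-1)$ or its sign-swapped partner. This is exactly the split into $A$ versus $B$ according to the parity of $n-k_0$.

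I expect this sign and index bookkeeping in the telescoping step to be the main obstacle. In particular, the precise index shifts in $L$ must be matched to make the cancellation exact, and I would verify them at small values such as $\ell=1$ and $k_0=1$ first.
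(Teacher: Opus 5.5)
Your argument is correct, but it attaches the hull differently from the derivation behind the quoted formula. The paper gives no proof of this statement; it imports it from \cite{LSL2024}. Both routes count LCD codes by the unitary group action. The cited derivation, as the paper describes it and as the shape of the factors shows, starts from the $L(n,k_0,q)$ LCD codes of dimension $k_0$ and enlarges the hull one dimension at a time. In the $i$-th factor, the numerator is the number of nonzero isotropic vectors in a unitary space of dimension $n-k_0-2i+2$, and the denominator $q^{2(k_0+i)}-q^{2k_0}$ corrects for overcounting, so $\prod_i A_{n,k_0,i}$ (or $\prod_i B_{n,k_0,i}$) comes out directly.

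You fix the hull first. The bijection $C\mapsto(\HHull(C),\,C/\HHull(C))$ involves no overcounting and gives the cleaner factorization $A^H_{n,k,\ell,q}=N_{\mathrm{iso}}(n,\ell)\,L(n-2\ell,k_0,q)$. The cost is the extra identity needed to reach the stated product. That identity does hold by your telescoping in $\ell$ (the base case $\ell=0$ is trivial), but your second bullet is muddled about which factors survive. The clean statement, valid for every $k_0\ge 0$, is
\[
\frac{L(n-2\ell,k_0,q)}{L(n-2\ell+2,k_0,q)}=q^{-2k_0}\,\frac{\bigl(q^{n-k_0-2\ell+2}-(-1)^{n-k_0}\bigr)\bigl(q^{n-k_0-2\ell+1}+(-1)^{n-k_0}\bigr)}{\bigl(q^{n-2\ell+2}-(-1)^{n}\bigr)\bigl(q^{n-2\ell+1}+(-1)^{n}\bigr)}.
\]
Its denominator is cancelled exactly by the new isotropic-count factor $I(n-2\ell+2)/(q^{2\ell}-1)$, leaving $A_{n,k_0,\ell}$ or $B_{n,k_0,\ell}$ according to the parity of $n-k_0$.

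A real advantage of your factorization is that it gives Theorem~\ref{thm:main} almost for free. Write $I(m)=(q^m-(-1)^m)(q^{m-1}-(-1)^{m-1})$ with $m=n-2\ell$, $a=k-\ell$, $b=m-a$. Then $N_{\mathrm{iso}}(n,\ell)/N_{\mathrm{iso}}(n,\ell+1)=(q^{2\ell+2}-1)/I(m)$ and $L(m,a,q)/L(m-2,a-1,q)=q^{m-1}I(m)/[(q^a-(-1)^a)(q^b-(-1)^b)]$. So $I(m)$ cancels and $\alpha^H$ appears directly, without the sign parameter of Lemma~\ref{lem:unified} or any telescoping. The cited route, by contrast, lands natively on the product form that the paper's proof is built around.
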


The same authors also obtained a closed mass formula for the symplectic case,
again via classical group action on symplectic LCD codes.

\begin{theorem}[{\cite[Theorem 4.9]{LSL2024}}]\label{thm:LSL-symp}
Let $n, k, \ell$ be positive integers with $\ell \leq k \leq 2n$ and
$k - \ell$ even, written as $k - \ell = 2k_0$. Then
\[
  A^S_{2n,k,\ell,q} = q^{2k_0(n-k_0-\ell)} \cdot
  \frac{\prod_{m=1}^{\ell}(q^{2(n-k_0-\ell+m)}-1)}{\prod_{m=1}^{\ell}(q^m-1)} \cdot
  \begin{bmatrix} n \\ k_0 \end{bmatrix}_{q^2}.
\]
In particular, the number of symplectic LCD $[2n, 2k_0]_q$ codes is
\[
  L_S(n, k_0, q) := A^S_{2n, 2k_0, 0, q} = q^{2k_0(n-k_0)} \begin{bmatrix} n \\ k_0 \end{bmatrix}_{q^2},
\]
and $A^S_{2n, k, \ell, q} = 0$ unless $k - \ell$ is even.
\end{theorem}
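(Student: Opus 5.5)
The plan is to count pairs (code, chosen decomposition) directly instead of appealing to orbit counting on the Grassmannian. Write $m = n - k_0$. By Proposition~\ref{prop:hull-dim}(2), a $[2n,k]_q$ code $C$ with $\dim \mathrm{Hull}_S(C) = \ell$ has $k - \ell$ even. This gives the vanishing statement at once. So assume $k - \ell = 2k_0$, and let $H = \mathrm{Hull}_S(C)$.

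\textbf{Structure of a single code.} The restriction of $\langle\cdot,\cdot\rangle_S$ to $C$ has radical exactly $H$. Hence every complement $D$ of $H$ in $C$ is a non-degenerate (symplectic LCD) subspace of dimension $2k_0$. Since $H \subseteq C^{\perp_S} \subseteq D^{\perp_S}$, the space $H$ is a totally isotropic $\ell$-subspace of $D^{\perp_S}$. Here $D^{\perp_S}$ is a non-degenerate symplectic space of dimension $2m$.

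Conversely, take any non-degenerate $2k_0$-dimensional $D$ and any totally isotropic $\ell$-dimensional $H \subseteq D^{\perp_S}$. Then $C = H \oplus D$ has radical exactly $H$, because $D$ is non-degenerate and $H$ is orthogonal to all of $C$. So $C$ is a $[2n,k]_q$ code with symplectic hull dimension $\ell$.

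A given $C$ arises from exactly $q^{2k_0\ell}$ pairs $(D,H)$. The space $H$ is forced to be the hull, and $D$ ranges over the complements of an $\ell$-subspace in a $k$-space. This yields
\[
A^S_{2n,k,\ell,q} = \frac{L_S(n,k_0,q)\cdot I(m,\ell)}{q^{2k_0\ell}},
\]
where $I(m,\ell)$ is the number of totally isotropic $\ell$-subspaces of a $2m$-dimensional symplectic space over $\F_q$.

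\textbf{Counting the LCD part.} I would compute $L_S(n,k_0,q)$ as $|Sp_{2n}(q)|/(|Sp_{2k_0}(q)|\,|Sp_{2m}(q)|)$. This uses Witt's theorem: $Sp_{2n}(q)$ acts transitively on non-degenerate $2k_0$-subspaces, with stabilizer $Sp(D)\times Sp(D^{\perp_S})$. Alternatively, count ordered symplectic bases of $D$ and divide by $|Sp_{2k_0}(q)|$. Inserting $|Sp_{2r}(q)| = q^{r^2}\prod_{i=1}^r (q^{2i}-1)$, the quotient becomes $q^{2k_0 m}\binom{n}{k_0}_{q^2}$, which is the claimed formula for $L_S$.

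\textbf{Counting totally isotropic subspaces.} I would count ordered bases $(v_1,\dots,v_\ell)$ of totally isotropic subspaces. The vector $v_{i+1}$ must lie in $\langle v_1,\dots,v_i\rangle^{\perp_S}$, which has dimension $2m-i$, and outside the span $\langle v_1,\dots,v_i\rangle$, which lies inside it since the span is totally isotropic. This gives $q^{2m-i}-q^i = q^i(q^{2(m-i)}-1)$ choices. Dividing by $|GL_\ell(q)| = q^{\ell(\ell-1)/2}\prod_{j=1}^\ell (q^j-1)$ gives
\[
I(m,\ell) = \frac{\prod_{i=0}^{\ell-1}(q^{2(m-i)}-1)}{\prod_{j=1}^{\ell}(q^j-1)}.
\]
Reindexing with $i = \ell - j$ turns the numerator into $\prod_{j=1}^{\ell}(q^{2(n-k_0-\ell+j)}-1)$.

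\textbf{Assembling and the main obstacle.} Combining the three pieces, the power of $q$ is $q^{2k_0(n-k_0)}/q^{2k_0\ell} = q^{2k_0(n-k_0-\ell)}$, which reproduces the statement. The only step needing care is the bijection/fiber-count claim, namely that every complement of the radical is non-degenerate and that the fibers have uniform size $q^{2k_0\ell}$. This is linear algebra of the radical rather than a real obstacle. The remaining work is the bookkeeping of powers of $q$ in the symplectic group orders.
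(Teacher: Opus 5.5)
Your proof is correct, and it takes a different route from the one the paper relies on. The paper does not reprove this count. It cites \cite[Theorem 4.9]{LSL2024}, where symplectic LCD codes are counted via the group action and the hull dimension is then raised one step at a time. That gives the recurrence $\prod_{i=1}^{\ell}(q^{2n-2k_0-2i+2}-1)/(q^{2k_0+i}-q^{2k_0})\cdot q^{2k_0(n-k_0)}\binom{n}{k_0}_{q^2}$. The corrected statement is then justified only by factoring and reindexing this recurrence, and by the numerical check $\sum_\ell A^S_{2n,k,\ell,q}=\binom{2n}{k}_q$.

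You instead fibre the codes once, via $C\mapsto(\mathrm{Hull}_S(C),D)$ with $D$ an arbitrary complement of the hull. Every complement of the radical is non-degenerate, so each code is hit exactly $q^{\ell(k-\ell)}=q^{2k_0\ell}$ times, and everything reduces to two textbook counts. The two computations agree factor by factor: the $i$-th step factor of the recurrence is $(q^{2(m-i+1)}-1)/(q^{2k_0}(q^{i}-1))$. That is exactly $I(m,i)/I(m,i-1)$ divided by the complement ratio $q^{2k_0}$.

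What your version buys:
\begin{itemize}
\item It is a self-contained proof of the corrected formula, rather than agreement with a recurrence plus computer checks.
\item It gives a conceptual reading $A^S_{2n,k,\ell,q}=L_S(n,k_0,q)\,I(n-k_0,\ell)/q^{2k_0\ell}$.
\item It yields the vanishing for $\ell>n-k_0$ for free, via the factor $q^{0}-1$.
\item The same fibration also reproduces the Hermitian product $\prod_i F_i$ of Lemma~\ref{lem:unified}. There the complement count is $(q^2)^{k_0\ell}$ and $I$ is replaced by the number of totally isotropic $\ell$-subspaces of a unitary space of dimension $n-k_0$ over $\F_{q^2}$, so both mass formulas are treated uniformly.
\end{itemize}
The paper's route is shorter only because it defers to \cite{LSL2024}. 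The price is that it must diagnose the misprinted exponents by comparison with the recurrence and numerics, whereas your derivation settles the correct form by proof.
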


\begin{remark}
The printed statement of~\cite[Theorem 4.9]{LSL2024} contains typographical
errors in the product factor: as written, the numerator and denominator exponents
do not depend on the running index in the manner required by the iterative
construction, and the resulting expression fails to yield integer values
(e.g., $7/3$ for $(n,k,\ell,q) = (2,2,2,2)$ instead of the correct $15$).
The form we state above is algebraically equivalent to the recurrence derived in
the proof of~\cite[Theorem 4.9]{LSL2024}, namely
$\prod_{i=1}^{\ell} (q^{2n-2k_0-2i+2}-1)/(q^{2k_0+i}-q^{2k_0}) \cdot
q^{2k_0(n-k_0)} \binom{n}{k_0}_{q^2}$,
obtained by factoring $q^{2k_0+i}-q^{2k_0} = q^{2k_0}(q^i-1)$ and reindexing.
We have verified numerically that this formula satisfies the consistency check
$\sum_\ell A^S_{2n,k,\ell,q} = \binom{2n}{k}_q$ for all parameter ranges
considered in this paper.
\end{remark}

\section{Ratio decomposition for Hermitian hull}\label{sec:main}

The main result of this section is a closed-form expression for the ratio
$A^H_{n,k,\ell,q} / A^H_{n,k,\ell+1,q}$ together with a sharp characterization
of the parameter set where the natural BBO-style inequality fails.

\subsection{A unified form of the mass formula}\label{sec:unified}

Theorem~\ref{thm:LSL} expresses $A^H_{n,k,\ell,q}$ as one of two products
according to the parity of $s = n - k_0$, with $A_{n,k_0,i}$ used for $s$ odd
and $B_{n,k_0,i}$ for $s$ even. The two cases differ only in the signs in the
numerator factors. We exploit this near-symmetry by introducing a single
sign parameter that uniformly handles both cases. This unified form is the
key technical device for the proof of Theorem~\ref{thm:main} below; it
reduces the four-case analysis that would arise from separately treating
parity transitions to a single algebraic manipulation.

\begin{lemma}[Unified mass formula]\label{lem:unified}
Let $n, k, \ell$ be positive integers with $\ell \leq k \leq n - \ell$, set
$k_0 = k - \ell$, $s = n - k_0$, and $\varepsilon = (-1)^{s+1}$. Define
\begin{equation}\label{eq:Fi}
  F_i(n, k_0, q) := \frac{(q^{s-2i+2} + \varepsilon)(q^{s-2i+1} - \varepsilon)}{q^{2k_0}(q^{2i}-1)}.
\end{equation}
Then for $s$ odd, $\varepsilon = 1$ and $F_i = A_{n,k_0,i}$; for $s$ even,
$\varepsilon = -1$ and $F_i = B_{n,k_0,i}$. Consequently, in both parity cases,
\begin{equation}\label{eq:unified}
  A^H_{n,k,\ell,q} = \left(\prod_{i=1}^{\ell} F_i(n,k_0,q)\right) \cdot L(n,k_0,q).
\end{equation}
\end{lemma}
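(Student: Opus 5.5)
The plan is a direct verification by substitution, followed by an appeal to Theorem~\ref{thm:LSL}.

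First, I rewrite the factors of Theorem~\ref{thm:LSL} in terms of $s = n - k_0$. Since $n - k_0 - 2i + 2 = s - 2i + 2$ and $n - k_0 - 2i + 1 = s - 2i + 1$, we have
\[
A_{n,k_0,i} = \frac{(q^{s-2i+2}+1)(q^{s-2i+1}-1)}{q^{2k_0}(q^{2i}-1)}, \qquad
B_{n,k_0,i} = \frac{(q^{s-2i+2}-1)(q^{s-2i+1}+1)}{q^{2k_0}(q^{2i}-1)}.
\]

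Next, I split on the parity of $s$.
\begin{itemize}
\item If $s$ is odd, then $s+1$ is even, so $\varepsilon = (-1)^{s+1} = 1$. Substituting into~\eqref{eq:Fi} gives numerator $(q^{s-2i+2}+1)(q^{s-2i+1}-1)$. This matches $A_{n,k_0,i}$ term by term, and the denominators coincide.
\item If $s$ is even, then $\varepsilon = -1$. The numerator of $F_i$ becomes $(q^{s-2i+2}-1)(q^{s-2i+1}+1)$, which is exactly the numerator of $B_{n,k_0,i}$.
\end{itemize}
Thus $F_i = A_{n,k_0,i}$ for $s$ odd and $F_i = B_{n,k_0,i}$ for $s$ even, for every $i$ with $1 \le i \le \ell$.

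Finally, I substitute this into the two cases of Theorem~\ref{thm:LSL}. The hypotheses $\ell \le k \le n-\ell$ are the same as those of that theorem, so it applies, and both cases collapse to the single product~\eqref{eq:unified} with the same factor $L(n,k_0,q)$. The only step needing care is matching the parity convention: Theorem~\ref{thm:LSL} uses $A$ when $n-k_0$ is odd, and the sign $\varepsilon=(-1)^{s+1}$ must send odd $s$ to $+1$. Beyond that there is no real obstacle; the argument is pure bookkeeping.
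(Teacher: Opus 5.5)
Your proposal is correct and matches the paper's proof: you rewrite the factors in terms of $s=n-k_0$, observe that $\varepsilon=(-1)^{s+1}$ is $1$ for odd $s$ (giving $A_{n,k_0,i}$) and $-1$ for even $s$ (giving $B_{n,k_0,i}$), and then read off~\eqref{eq:unified} from the two cases of Theorem~\ref{thm:LSL}. Your check that odd $s$ must correspond to $A$ under Theorem~\ref{thm:LSL}'s convention is the only real content of the argument, and you handle it correctly.
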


\begin{proof}
For $s$ odd, $(-1)^{s+1} = (-1)^{\text{even}} = 1$, hence $\varepsilon = 1$ and
$F_i = (q^{s-2i+2}+1)(q^{s-2i+1}-1)/[q^{2k_0}(q^{2i}-1)] = A_{n,k_0,i}$.
For $s$ even, $\varepsilon = -1$ and analogously $F_i = B_{n,k_0,i}$.
Identity~\eqref{eq:unified} then follows directly from Theorem~\ref{thm:LSL}.
\end{proof}

\begin{remark}
The introduction of $\varepsilon$ in Lemma~\ref{lem:unified} simplifies the
parity bookkeeping: passing from $k_0$ to $k_0 - 1$ (which occurs when
incrementing $\ell$ in the ratio decomposition) sends $s \to s+1$ and
$\varepsilon \to -\varepsilon$, so a single sign-flip captures the parity
transition. This is the essential feature that allows the four-case analysis
implicit in Theorem~\ref{thm:LSL} to be replaced by a uniform telescoping
argument in the proof of Theorem~\ref{thm:main}.
\end{remark}

\subsection{Main theorem}\label{sec:Hmain}

\begin{theorem}\label{thm:main}
Let $n, k, \ell$ be positive integers with $\ell+1 \leq k \leq n-\ell-1$,
and set $a = k - \ell$, $b = n-k-\ell$. Then
\begin{equation}\label{eq:decomp}
  A^H_{n,k,\ell,q} = \alpha^H_{n,k,\ell,q}\cdot (q^{\ell+1}-1)\cdot A^H_{n,k,\ell+1,q},
\end{equation}
where
\begin{equation}\label{eq:alpha}
  \alpha^H_{n,k,\ell,q} = \frac{q^{n-2\ell-1}(q^{\ell+1}+1)}{(q^{a}-(-1)^{a})(q^{b}-(-1)^{b})}.
\end{equation}
Moreover, $\alpha^H_{n,k,\ell,q} > 1$ for all valid parameters \emph{except}
when $\ell = 0$, both $a$ and $b$ are odd, and $\min(a,b) = 1$
(equivalently, $\ell=0$, $n$ is even, and $k\in\{1, n-1\}$); in that exceptional case
\[
  \alpha^H_{n,k,0,q} = \frac{q^{\max(a,b)}}{q^{\max(a,b)}+1} \in \left[\tfrac{q}{q+1},\, 1\right).
\]
In all cases, $\alpha^H_{n,k,\ell,q} \geq q/(q+1) \geq 2/3$.
\end{theorem}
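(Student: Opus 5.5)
The plan is to compute the quotient $A^H_{n,k,\ell,q}/A^H_{n,k,\ell+1,q}$ directly from the unified mass formula of Lemma~\ref{lem:unified}, using the sign-flip observation in the remark after it. For hull dimension $\ell$ we have $k_0=a$, $s=n-a$ and $\varepsilon=(-1)^{s+1}$. For hull dimension $\ell+1$ we have $k_0=a-1$, $s'=s+1$ and $\varepsilon'=-\varepsilon$. Since $a\ge 1$ and $b\ge 1$, both values of $\ell$ are in the admissible range of Theorem~\ref{thm:LSL}. The quotient splits into two pieces: the ratio $L(n,a,q)/L(n,a-1,q)$ and the ratio of the two $F$-products.

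\textbf{Step 1 (LCD ratio).} In $L(n,a,q)/L(n,a-1,q)$ the power of $q$ is $q^{a(n-a)-(a-1)(n-a+1)}=q^{n-2a+1}$. The numerator products run over the exponents $\{n-a+1,\dots,n\}$ and $\{n-a+2,\dots,n\}$, and the denominator products run over $\{1,\dots,a\}$ and $\{1,\dots,a-1\}$. After cancellation this leaves
\[
q^{n-2a+1}\,\frac{q^{s+1}-(-1)^{s+1}}{q^a-(-1)^a}.
\]

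\textbf{Step 2 (telescoping $F$-products).} I write out the numerators of the two products:
\[
F_i(n,a):\ (q^{s-2i+2}+\varepsilon)(q^{s-2i+1}-\varepsilon),\qquad F_i(n,a-1):\ (q^{s-2i+3}-\varepsilon)(q^{s-2i+2}+\varepsilon).
\]
Each product is then a chain of consecutive exponents with alternating signs. The first, over $i=1,\dots,\ell$, covers exponents $s$ down to $s-2\ell+1$. The second, over $i=1,\dots,\ell+1$, covers $s+1$ down to $s-2\ell$. So the quotient of numerators is $1/\bigl[(q^{s+1}-\varepsilon)(q^{s-2\ell}+\varepsilon)\bigr]$.

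The denominators contribute $q^{2(a-1)(\ell+1)-2a\ell}\,(q^{2\ell+2}-1)$.

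\textbf{Step 3 (assembly).} Since $\varepsilon=(-1)^{s+1}$, the factor $q^{s+1}-\varepsilon$ equals $q^{s+1}-(-1)^{s+1}$ and cancels the leftover from Step 1. Since $s-2\ell=b$ and $b\equiv s \pmod 2$, we get $q^{s-2\ell}+\varepsilon=q^b-(-1)^b$. Factoring $q^{2\ell+2}-1=(q^{\ell+1}-1)(q^{\ell+1}+1)$ then isolates $(q^{\ell+1}-1)$. Adding the powers of $q$ gives $(n-2a+1)+2(a-1)(\ell+1)-2a\ell=n-2\ell-1$, which yields \eqref{eq:alpha}. I expect this bookkeeping of signs and exponents to be the main place where errors creep in. I would check it numerically at small parameters, for instance $n=4$, $k=2$, $\ell=0$, $q=2$.

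\textbf{Step 4 (bounds).} Here $a+b=n-2\ell$, so the numerator of \eqref{eq:alpha} is $q^{a+b-1}(q^{\ell+1}+1)$, and the denominator is at most $(q^a+1)(q^b+1)$, with equality exactly when $a$ and $b$ are both odd.

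\emph{Case $\ell\ge1$.} Then $q^{\ell+1}+1\ge q^2+1$. Since $q^a+1\le q^a(1+q^{-1})$ and $q^b+1\le q^b(1+q^{-1})$, we get
\[
\alpha^H\ge \frac{q^{a+b-1}(q^2+1)}{q^{a+b}(1+q^{-1})^2}=\frac{q(q^2+1)}{(q+1)^2}>1 \quad\text{for } q\ge2.
\]

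\emph{Case $\ell=0$.} Then $\alpha^H=q^{a+b-1}(q+1)/\bigl[(q^a\pm1)(q^b\pm1)\bigr]$, and I compare $q^{a+b-1}(q+1)=q^{a+b}+q^{a+b-1}$ with the denominator.
If some factor is $q^a-1$ (resp.\ $q^b-1$), the denominator is at most $q^{a+b}+q^{\max(a,b)}$ minus something positive. It is then less than $q^{a+b}+q^{a+b-1}$ unless the exponents degenerate, so $\alpha^H>1$.
If $a$ and $b$ are both odd, the denominator is $q^{a+b}+q^a+q^b+1$. This is less than $q^{a+b}+q^{a+b-1}$ when $\min(a,b)\ge3$, because then $q^a+q^b+1<q^{a+b-1}$. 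When $\min(a,b)=1$, say $a=1$, the expression simplifies to $q^b(q+1)/\bigl[(q+1)(q^b+1)\bigr]=q^b/(q^b+1)$. This lies in $[q/(q+1),1)$, giving the exceptional family; here $n=a+b$ is even and $k=a\in\{1,n-1\}$.
The global bound $\alpha^H\ge q/(q+1)\ge 2/3$ follows from this case analysis.
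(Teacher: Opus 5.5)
Your proposal is correct and follows the paper's proof essentially step for step. It uses the same unified-formula computation of $L(n,a,q)/L(n,a-1,q)$ and the same telescoping of the $F$-products; your chain-of-exponents bookkeeping is equivalent to the paper's term-by-term ratios $F_i(n,k_0,q)/F_i(n,k_0-1,q)$. It also has the same cancellation of $q^{s+1}-\varepsilon$, the same identification $q^{s-2\ell}+\varepsilon=q^b-(-1)^b$, and the same parity analysis at $\ell=0$. The one small difference is that for $\ell\ge1$ you handle all parities at once by bounding the denominator by $(q^a+1)(q^b+1)\le q^{a+b}(1+q^{-1})^2$, which gives $\alpha^H\ge q(q^2+1)/(q+1)^2>1$; this is a little cleaner than the paper's separate parity subcases. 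Your hedge ``unless the exponents degenerate'' at $\ell=0$ is unnecessary, since $\min(a,b)\ge1$ already gives $q^{\max(a,b)}\le q^{a+b-1}$.
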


\begin{remark}
The lower bound $\alpha^H \geq q/(q+1) \geq 2/3$ is strictly stronger
than the Euclidean bound $\alpha \geq 1/2$ of~\cite{BBOz2026}.
Furthermore, the Hermitian exceptional set is substantially smaller than its
Euclidean counterpart: the latter involves residues modulo~$4$ and quadratic
residuacity in $\F_q^*$, while the former is confined to the boundary
$k\in\{1,n-1\}$ at $\ell = 0$. In the exceptional case, $\alpha^H \to 1$
as $\max(a,b) \to \infty$, so the deficit vanishes asymptotically.
\end{remark}

\begin{proof}[Proof of Theorem~\ref{thm:main}]
By the unified mass formula~\eqref{eq:unified}, the ratio in~\eqref{eq:decomp}
expands as
\[
  \frac{A^H_{n,k,\ell,q}}{A^H_{n,k,\ell+1,q}}
  = \frac{\prod_{i=1}^{\ell}\frac{F_i(n,k_0,q)}{F_i(n,k_0-1,q)}\cdot
    \frac{L(n,k_0,q)}{L(n,k_0-1,q)}}{F_{\ell+1}(n,k_0-1,q)}.
\]
Passing from $k_0$ to $k_0-1$ sends $s\to s+1$, hence $\varepsilon\to -\varepsilon$.
Substituting into the unified $F_i$ definition~\eqref{eq:Fi} and simplifying,
\[
  \frac{F_i(n,k_0,q)}{F_i(n,k_0-1,q)} = \frac{q^{s-2i+1}-\varepsilon}{q^{2}(q^{s-2i+3}-\varepsilon)},
\]
whose product over $i = 1, \ldots, \ell$ telescopes to
\[
  \prod_{i=1}^{\ell}\frac{F_i(n,k_0,q)}{F_i(n,k_0-1,q)}
  = q^{-2\ell} \cdot \frac{q^{s-2\ell+1}-\varepsilon}{q^{s+1}-\varepsilon}.
\]
Direct manipulation of $L(n, k_0, q)$ from Theorem~\ref{thm:LSL} yields
\[
  \frac{L(n,k_0,q)}{L(n,k_0-1,q)} = q^{s-k_0+1}\cdot\frac{q^{s+1}-\varepsilon}{q^{k_0}-(-1)^{k_0}},
\]
and a straightforward substitution gives
\[
  F_{\ell+1}(n,k_0-1,q) = \frac{(q^{s-2\ell+1}-\varepsilon)(q^{s-2\ell}+\varepsilon)}{q^{2k_0-2}(q^{2\ell+2}-1)}.
\]
Assembling all three factors, the terms $(q^{s-2\ell+1}-\varepsilon)$ and
$(q^{s+1}-\varepsilon)$ cancel. Using $s+k_0 = n$,
$q^{2\ell+2}-1 = (q^{\ell+1}-1)(q^{\ell+1}+1)$, and the identity
$q^{s-2\ell}+\varepsilon = q^{b}-(-1)^{b}$
(which follows from $s-b = 2\ell$ and $\varepsilon = (-1)^{s+1} = (-1)^{b+1}$),
we obtain~\eqref{eq:alpha}.

Now we bound $\alpha^H$.
Set $N = q^{a+b-1}(q^{\ell+1}+1)$ and $D = (q^{a}-(-1)^{a})(q^{b}-(-1)^{b})$,
so that $\alpha^H = N/D$. We analyze $N - D$ by parity of the pair $(a,b)$.

\emph{Case $(a, b)$ both even.}
Here $D = q^{a+b}-q^{a}-q^{b}+1$, so
\[
  N - D = (q^{a+b+\ell}-q^{a+b}) + q^{a+b-1} + q^{a} + q^{b} - 1.
\]
The first term is non-negative ($q^{a+b+\ell} \geq q^{a+b}$), and since
$a, b \geq 2$ we have $q^{a+b-1} \geq q^3 \geq 8 > 1$, so $N - D > 0$.

\emph{Case $(a, b)$ of opposite parity.}
By symmetry assume $a$ odd, $b$ even. Then $D = q^{a+b}-q^{a}+q^{b}-1$ and
\[
  N - D = q^{a+b}(q^{\ell}-1) + q^{a+b-1} + q^{a} - q^{b} + 1.
\]
For $\ell\geq 1$, the first term satisfies $q^{a+b}(q^{\ell}-1) \geq q^{a+b}(q-1) \geq q^{a+b} > q^{b}$,
yielding $N-D > 0$.
For $\ell = 0$, $N - D = q^{b}(q^{a-1}-1) + q^{a} + 1$,
which is positive for $a \geq 2$, and equals $q+1 > 0$ when $a = 1$.

\emph{Case $(a, b)$ both odd.}
Here $D = q^{a+b}+q^{a}+q^{b}+1$ and
\[
  N - D = q^{a+b-1}(q^{\ell+1}+1) - q^{a+b} - q^{a} - q^{b} - 1.
\]
For $\ell \geq 1$, since $q^{\ell+1}+1 \geq q^{2}+1$, we have
\[
  N - D \geq q^{a+b-1}(q^{2}-q+1) - q^{a} - q^{b} - 1.
\]
Assuming without loss of generality that $a \leq b$, since $q^{2}-q+1 \geq 3$ and $a \geq 1$,
$3q^{a+b-1} \geq 3q^{b} > 2q^{b}+1 \geq q^{a}+q^{b}+1$, hence $N-D > 0$.

For $\ell = 0$, $N - D = q^{a+b-1} - q^{a} - q^{b} - 1$.
If $a \geq 3$ (so $b \geq 3$), $q^{a+b-1} = q^{a-1}q^{b} \geq q^{2}q^{b} \geq 4q^{b} > 2q^{b}+1 \geq q^{a}+q^{b}+1$.
If $a = 1$, $N - D = q^{b} - q - q^{b} - 1 = -(q+1) < 0$, the exceptional case.
Here
\[
  \alpha^H = \frac{q^{b}(q+1)}{(q+1)(q^{b}+1)} = \frac{q^{b}}{q^{b}+1},
\]
and $q^{b}/(q^{b}+1) \geq q/(q+1)$ for $b \geq 1$ (with equality at $b=1$).
The same bound holds by $a$-$b$ symmetry if $b = 1$.
Combining the cases, $\alpha^H > 1$ unless $\ell=0$, $(a,b)$ are both odd, and
$\min(a,b) = 1$, in which case $\alpha^H = q^{\max(a,b)}/(q^{\max(a,b)}+1) \in [q/(q+1), 1)$.
\end{proof}

\begin{corollary}\label{cor:Amonotone-strict}
For all $q \geq 2$ and $\ell+1\leq k\leq n-\ell-1$:
\[
  A^H_{n,k,\ell,q} > (q^{\ell+1}-1)\cdot A^H_{n,k,\ell+1,q}
\]
holds for all valid parameters except when $q = 2$, $\ell = 0$, $n$ is even, and $k\in\{1, n-1\}$.
In each such exceptional case,
$A^H_{n,k,0,2} = (2^{n-1}/(2^{n-1}+1))\cdot A^H_{n,k,1,2}$.
\end{corollary}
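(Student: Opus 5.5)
The corollary will follow by direct specialization of Theorem~\ref{thm:main}, without returning to the mass formula of Theorem~\ref{thm:LSL}.

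First, I rewrite the desired inequality. Since $q^{\ell+1}-1>0$ and $A^H_{n,k,\ell+1,q}>0$ on the stated range $\ell+1\le k\le n-\ell-1$ (all factors in \eqref{eq:unified} are positive there), the decomposition \eqref{eq:decomp} shows
\[
A^H_{n,k,\ell,q} > (q^{\ell+1}-1)A^H_{n,k,\ell+1,q} \iff \alpha^H_{n,k,\ell,q}>1 .
\]
So the corollary reduces to reading off the sign of $\alpha^H-1$ from the case analysis in the proof of Theorem~\ref{thm:main}.

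Second, I treat the non-exceptional parameters. Outside the family $\ell=0$, $a,b$ odd, $\min(a,b)=1$, Theorem~\ref{thm:main} gives $\alpha^H>1$ for every $q\ge 2$. Hence the strict inequality holds there. I will restate the exceptional family in the corollary's terms. With $\ell=0$ we have $a=k$ and $b=n-k$. Then $a,b$ both odd with $\min(a,b)=1$ is exactly the condition that $n$ is even and $k\in\{1,n-1\}$. In that case $\max(a,b)=n-1$.

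Third, I treat the exceptional family at $q=2$. Here $q^{\ell+1}-1=1$, so \eqref{eq:decomp} collapses to
\[
A^H_{n,k,0,2}=\alpha^H_{n,k,0,2}\,A^H_{n,k,1,2}.
\]
Theorem~\ref{thm:main} gives $\alpha^H=q^{\max(a,b)}/(q^{\max(a,b)}+1)$, which at $q=2$ equals $2^{n-1}/(2^{n-1}+1)$. This is exactly the displayed formula in the corollary. Because this value is less than $1$, the strict inequality fails on this family, which confirms that it is genuinely exceptional.

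The step I expect to be the obstacle is reconciling the restriction of the exceptional set to $q=2$ with the theorem. By the equivalence in the first step, on the family $\ell=0$, $n$ even, $k\in\{1,n-1\}$ one has $\alpha^H=q^{n-1}/(q^{n-1}+1)<1$ for every $q$. So the literal inequality $A^H_{n,k,0,q}>(q-1)A^H_{n,k,1,q}$ fails for $q\ge3$ as well. To match the corollary's wording, I would check whether a different comparison is intended for $q\ge 3$. The natural candidate is $A^H_{n,k,0,q}>A^H_{n,k,1,q}$, which holds because $\alpha^H(q-1)\ge \tfrac{q}{q+1}\cdot 2>1$ when $q\ge3$. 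The $q=2$ case is then the only one where even this weaker comparison fails. If no such reading is intended, the statement as worded should list the exceptional family for all $q$, with $2^{n-1}$ replaced by $q^{n-1}$ in the final formula.
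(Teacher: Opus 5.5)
Your reduction to $\alpha^H_{n,k,\ell,q}>1$ via \eqref{eq:decomp} and your treatment of the non-exceptional parameters follow the paper's route. The discrepancy you raise in your last paragraph is real: the statement as printed is false for $q\ge 3$.

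On the family $\ell=0$, $n$ even, $k\in\{1,n-1\}$, Theorem~\ref{thm:main} gives $\alpha^H=q^{n-1}/(q^{n-1}+1)<1$ for every $q$, so $A^H_{n,k,0,q}<(q-1)A^H_{n,k,1,q}$. The paper's own Table~\ref{tab:hermitian-A} shows this at $(n,k,\ell,q)=(4,1,0,3)$, where $A^H_0=540<560=2\cdot 280=(q-1)A^H_1$.

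The paper's proof errs exactly where you suspected. It tests $\alpha^H(q-1)>1$, which holds iff $q\ge 3$. By \eqref{eq:decomp}, however, that is the criterion for the unweighted comparison $A^H_{n,k,0,q}>A^H_{n,k,1,q}$ of Corollary~\ref{cor:Amonotone}, not for the inequality carrying the factor $q^{\ell+1}-1$. The two criteria coincide only at $q=2$. This is why the displayed identity $A^H_{n,k,0,2}=\frac{2^{n-1}}{2^{n-1}+1}A^H_{n,k,1,2}$ is still correct (e.g.\ $40=\frac{8}{9}\cdot 45$).

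So your argument proves the corrected statement: the strict inequality holds for all valid parameters except $\ell=0$, $n$ even, $k\in\{1,n-1\}$, and this exception occurs for every $q\ge 2$. The restriction to $q=2$ belongs only to Corollary~\ref{cor:Amonotone}. That is your ``natural candidate'' reading, and you verify it correctly via $\alpha^H(q-1)\ge 2q/(q+1)>1$ for $q\ge 3$.

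One slip in your proposed repair: for general $q$ the exceptional identity is $A^H_{n,k,0,q}=\frac{(q-1)\,q^{n-1}}{q^{n-1}+1}\,A^H_{n,k,1,q}$. Merely replacing $2^{n-1}$ by $q^{n-1}$ drops the factor $q-1$. At $(4,1,0,3)$ that would give $\frac{27}{28}\cdot 280=270\neq 540$.
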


\begin{proof}
When $\alpha^H > 1$, the inequality immediately follows by~\eqref{eq:decomp}.
In the exceptional case of Theorem~\ref{thm:main},
$\alpha^H(q-1) = (q^{\max(a,b)}/(q^{\max(a,b)}+1))(q-1) > 1$
if and only if $(q-1)q^{\max(a,b)} > q^{\max(a,b)}+1$, which holds if and only if
$q\geq 3$. Hence $\alpha^H(q-1) \leq 1$ only when $q=2$, and in this case the
exceptional case of Theorem~\ref{thm:main} forces $\ell = 0$, both $a$ and $b$
odd, and $\min(a,b) = 1$; consequently $n = a + b$ is even, with $k = 1$ (when
$a = 1$) or $k = n - 1$ (when $b = 1$). For $q = 2$ the stated equality holds
by~\eqref{eq:alpha}.
\end{proof}

\begin{corollary}\label{cor:Amonotone}
For all $q \geq 2$ and $0\leq\ell<\min(k, n-k)$:
\[
  A^H_{n,k,\ell,q} > A^H_{n,k,\ell+1,q}
\]
holds for all valid parameters except $(q, \ell, k) = (2, 0, 1)$ and $(2, 0, n-1)$ with $n$ even.
\end{corollary}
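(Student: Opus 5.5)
We deduce Corollary~\ref{cor:Amonotone} from the ratio decomposition~\eqref{eq:decomp} of Theorem~\ref{thm:main}. The plan is to compare $\alpha^H_{n,k,\ell,q}(q^{\ell+1}-1)$ with $1$. The hypothesis $0\le \ell<\min(k,n-k)$ is exactly $\ell+1\le k\le n-\ell-1$, so Theorem~\ref{thm:main} applies.

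We also record that $A^H_{n,k,\ell+1,q}>0$ in this range. This follows from Lemma~\ref{lem:unified}: $L(n,k_0,q)>0$, and each factor $F_i$ is positive because $s-2i+1\ge 1$ for $i\le \ell+1$. Indeed, $s=n-k_0=n-k+\ell+1$ when $k_0=k-\ell-1$, so $s-2\ell-1=n-k-\ell\ge 1$. Hence the inequality $A^H_{n,k,\ell,q}>A^H_{n,k,\ell+1,q}$ is equivalent to $\alpha^H(q^{\ell+1}-1)>1$.

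\emph{Non-exceptional case.} Suppose the parameters lie outside the exceptional set of Theorem~\ref{thm:main}, so that $\alpha^H>1$.
\begin{itemize}
\item If $\ell\ge 1$, then $q^{\ell+1}-1\ge 3$, so the product exceeds $3$.
\item If $\ell=0$, then $q^{\ell+1}-1=q-1\ge 1$, so the product is $\alpha^H(q-1)\ge \alpha^H>1$.
\end{itemize}
Equivalently, this case reduces to Corollary~\ref{cor:Amonotone-strict}, since $(q^{\ell+1}-1)A^H_{n,k,\ell+1,q}\ge A^H_{n,k,\ell+1,q}$.

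\emph{Exceptional case.} Here $\ell=0$, $n$ is even, and $k\in\{1,n-1\}$. Then
\[
\alpha^H(q-1)=\frac{(q-1)q^{m}}{q^{m}+1}, \qquad m=\max(a,b)=n-1\ge 1.
\]
This quantity exceeds $1$ if and only if $(q-2)q^m>1$, that is, if and only if $q\ge 3$. Hence the strict inequality holds for $q\ge3$. For $q=2$ the product equals $2^{n-1}/(2^{n-1}+1)<1$, so $A^H_{n,k,0,2}<A^H_{n,k,1,2}$ and the inequality fails. These are exactly the listed exceptions $(q,\ell,k)=(2,0,1)$ and $(2,0,n-1)$ with $n$ even.

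The argument is a short case check, and no step is a serious obstacle. The only point requiring care is confirming that the hypothesis range coincides with the validity range of Theorem~\ref{thm:main}, and that the counts are positive so that the ratio comparison is legitimate.
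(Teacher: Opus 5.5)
Your proposal is correct and follows the same route as the paper: apply the ratio decomposition of Theorem~\ref{thm:main}, use $\alpha^H>1$ together with $q^{\ell+1}-1\geq 1$ outside the boundary family, and compute $\alpha^H(q-1)$ explicitly inside it. You are slightly more complete than the paper's written proof: you verify $A^H_{n,k,\ell+1,q}>0$, and you explicitly treat the $q\geq 3$ members of the boundary family (where $(q-2)q^{n-1}>1$), which the paper passes over by writing the factor $q-1$ as $1$.
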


\begin{proof}
Outside the exceptional family, $\alpha^H > 1$ and $q^{\ell+1}-1 \geq 1$, hence the product
exceeds~$1$ and the inequality holds.
In the exceptional family, $\alpha^H(q-1) = q^{n-1}/(q^{n-1}+1) \cdot 1 < 1$,
so $A^H_{n,k,0,2} < A^H_{n,k,1,2}$.
\end{proof}

\begin{corollary}[Asymptotic behavior of $A^H_\ell / A^H_{\ell+1}$]\label{cor:alpha-herm-asym}
Let $\ell \geq 0$ and $q$ be fixed, and set $a = k - \ell$, $b = n - k - \ell$
(so $a, b \geq 0$ and $a + b + 2\ell = n$). Recall $a$ measures the
``room'' above $\ell$ in the code dimension ($k = a + \ell$) and $b$ measures
the corresponding room in the codimension ($n - k = b + \ell$).
\begin{enumerate}
\item (\emph{Boundary regime: $k$ fixed, $n \to \infty$.})
Fix $a \geq 1$ and let $b \to \infty$. Then
\[
  \frac{A^H_{n,k,\ell,q}}{A^H_{n,k,\ell+1,q}}
  \;\longrightarrow\; \frac{q^{a-1}(q^{\ell+1}+1)(q^{\ell+1}-1)}{q^a - (-1)^a}.
\]
\item (\emph{Joint regime: $k \to \infty$ and $n - k \to \infty$.})
Let $a, b \to \infty$ simultaneously. Then
\[
  \frac{A^H_{n,k,\ell,q}}{A^H_{n,k,\ell+1,q}}
  \;\longrightarrow\; \frac{q^{2(\ell+1)} - 1}{q}.
\]
\item The asymptotic ratio in~(2) is minimized at $\ell = 0$, equal to
$(q^2 - 1)/q = q - q^{-1}$. This evaluates to $3/2$ at $q = 2$ and grows
like $q$ for large $q$, so $A^H_{n,k,\ell,q} > A^H_{n,k,\ell+1,q}$
asymptotically for every $\ell \geq 0$ and every $q \geq 2$.
\end{enumerate}
\end{corollary}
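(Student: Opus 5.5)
Everything follows from the closed form \eqref{eq:alpha} via \eqref{eq:decomp}. Write
\[
  \frac{A^H_{n,k,\ell,q}}{A^H_{n,k,\ell+1,q}} = \alpha^H_{n,k,\ell,q}(q^{\ell+1}-1)
  = \frac{q^{a+b-1}(q^{\ell+1}+1)(q^{\ell+1}-1)}{(q^a-(-1)^a)(q^b-(-1)^b)},
\]
using $n-2\ell-1 = a+b-1$. With $\ell$ and $q$ fixed, each regime is a limit of this explicit rational function of $q^a$ and $q^b$, so no further appeal to Theorem~\ref{thm:LSL} is needed.

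For part (1), fix $a\ge 1$ and split $q^{a+b-1} = q^{a-1}\cdot q^b$. The factor depending on $b$ is then $q^b/(q^b-(-1)^b) = 1/(1-(-1)^b q^{-b})$, which tends to $1$ as $b\to\infty$ regardless of the parity of $b$. The remaining factor $q^{a-1}(q^{2\ell+2}-1)/(q^a-(-1)^a)$ is independent of $b$, and this is exactly the stated limit. For part (2), write $q^{a+b-1} = q^{-1}q^aq^b$ and apply the same argument to both $q^a/(q^a-(-1)^a)$ and $q^b/(q^b-(-1)^b)$, each of which tends to $1$. The limit is $(q^{2\ell+2}-1)/q$. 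Nothing here depends on how $a,b\to\infty$ jointly, since the two factors are bounded separately.

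For part (3), the map $\ell\mapsto (q^{2(\ell+1)}-1)/q$ is strictly increasing in $\ell$, so its minimum is at $\ell=0$, where it equals $(q^2-1)/q = q-q^{-1}$. This gives $3/2$ at $q=2$, and since $q - q^{-1} \ge 3/2 > 1$ for every $q\ge 2$, the limiting ratio exceeds $1$ for every $\ell\ge0$. Because the finite-parameter ratio converges to a limit strictly above $1$, it exceeds $1$ for all sufficiently large $a,b$; this is the precise meaning of the asymptotic monotonicity claim.

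The only point needing care is the sign term $(-1)^a$ or $(-1)^b$, which oscillates with parity. Convergence to $1$ must be shown uniformly in parity, and the bound $|q^{b}/(q^b\mp1)-1|\le 1/(q^b-1)$ handles both signs at once. A related remark for part (1): the limit $q^{a-1}(q^{2\ell+2}-1)/(q^a-(-1)^a)$ can be below $1$ at $a=1$, $\ell=0$, $q=2$, where it equals $3/3=1$. This is consistent with Corollary~\ref{cor:Amonotone}'s exceptional family and does not affect part (3), which concerns only the joint regime.
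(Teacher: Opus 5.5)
Your proposal is correct and follows the same route as the paper: rewrite the ratio as $\alpha^H(q^{\ell+1}-1)$ with $\alpha^H = q^{a+b-1}(q^{\ell+1}+1)/[(q^a-(-1)^a)(q^b-(-1)^b)]$, let the factors $q^b/(q^b-(-1)^b)$ (and, in the joint regime, also $q^a/(q^a-(-1)^a)$) tend to $1$, and use that the limit is increasing in $\ell$ for part (3). One small slip in your closing remark: at $a=1$, $\ell=0$, $q=2$ the limit in part (1) is exactly $1$, not below it; it is the finite ratios $2^b/(2^b+1)$ for odd $b$ that fall below $1$, which is the exceptional family of Corollary~\ref{cor:Amonotone}.
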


\begin{proof}
By Theorem~\ref{thm:main}, $A^H_\ell/A^H_{\ell+1} = \alpha^H \cdot (q^{\ell+1}-1)$,
and using $a + b + 2\ell = n$ the formula~\eqref{eq:alpha} can be rewritten as
$\alpha^H = q^{a+b-1}(q^{\ell+1}+1)/[(q^a-(-1)^a)(q^b-(-1)^b)]$.

(1) For fixed $a$ and $b \to \infty$, $(q^b - (-1)^b)/q^b \to 1$, so
\[
  \alpha^H
  = \frac{q^{a-1}(q^{\ell+1}+1)}{q^a - (-1)^a} \cdot \frac{q^b}{q^b - (-1)^b}
  \;\longrightarrow\; \frac{q^{a-1}(q^{\ell+1}+1)}{q^a - (-1)^a},
\]
and multiplying by $(q^{\ell+1}-1)$ gives the displayed formula.

(2) For $a, b \to \infty$ jointly, both $(q^a - (-1)^a)/q^a$ and
$(q^b - (-1)^b)/q^b$ tend to $1$, hence
$\alpha^H \to (q^{\ell+1}+1)/q$ and
$A^H_\ell/A^H_{\ell+1} \to (q^{\ell+1}+1)(q^{\ell+1}-1)/q = (q^{2(\ell+1)}-1)/q$.

(3) The map $\ell \mapsto (q^{2(\ell+1)}-1)/q$ is strictly increasing on
$\ell \geq 0$, so the minimum is attained at $\ell = 0$ with value
$(q^2 - 1)/q$. At $q = 2$ this equals $3/2 > 1$, and it increases without
bound in $q$, so the ratio strictly exceeds $1$ for all $q \geq 2$ and all
$\ell \geq 0$.
\end{proof}

\subsection{Numerical illustration of the boundary exception}\label{sec:Hnum}

Table~\ref{tab:hermitian-A} lists $A^H_{n,k,\ell,q}$ from Theorem~\ref{thm:LSL}
for small parameters at $q = 2$ (codes over $\F_4$) and $q = 3$ (codes over
$\F_9$). Boldface marks entries where $A^H_\ell$ strictly exceeds $A^H_{\ell-1}$,
confirming Corollary~\ref{cor:Amonotone-strict} on the boundary exception
family $\{(n, k, 0, 2) : n \text{ even},\ k \in \{1, n-1\}\}$.

\begin{table}[h]
\centering
\caption{$A^H_{n,k,\ell,q}$ (closed form, Theorem~\ref{thm:LSL}).
Boldface marks $A^H_{\ell+1} > A^H_\ell$ (boundary exception of
Corollary~\ref{cor:Amonotone-strict}).}
\label{tab:hermitian-A}
\begin{tabular}{ccc|rrrr}
\toprule
$n$ & $k$ & $q$ & $A^H_0$ & $A^H_1$ & $A^H_2$ & $A^H_3$ \\
\midrule
4 & 1 & 2 & 40 & \textbf{45} &  & \\
5 & 1 & 2 & 176 & 165 &  & \\
6 & 1 & 2 & 672 & \textbf{693} &  & \\
7 & 1 & 2 & 2752 & 2709 &  & \\
4 & 2 & 2 & 240 & 90 & 27 & \\
5 & 2 & 2 & 3520 & 1980 & 297 & \\
6 & 2 & 2 & 59136 & 27720 & 6237 & \\
6 & 3 & 2 & 197120 & 166320 & 12474 & 891 \\
7 & 2 & 2 & 924672 & 476784 & 89397 & \\
7 & 3 & 2 & 13561856 & 9535680 & 1072764 & 38313 \\
8 & 2 & 2 & 14970880 & 7368480 & 1519749 & \\
\midrule
4 & 1 & 3 & 540 & 280 &  & \\
5 & 1 & 3 & 4941 & 2440 &  & \\
6 & 1 & 3 & 44226 & 22204 &  & \\
4 & 2 & 3 & 5670 & 1680 & 112 & \\
5 & 2 & 3 & 444690 & 153720 & 6832 & \\
6 & 2 & 3 & 36420111 & 11990160 & 621712 & \\
6 & 3 & 3 & 312172380 & 125896680 & 3730272 & 27328 \\
\bottomrule
\end{tabular}
\end{table}

At $q = 2$, the bold entries at $[4,1]_4$ and $[6,1]_4$ confirm the
$A$-monotonicity boundary failure: precisely the family
$(q, \ell, k) = (2, 0, 1)$ or $(2, 0, n-1)$ with $n$ even. At $q = 3$ the
analogous parameters $[4,1]_9, [5,1]_9, [6,1]_9$ all satisfy
$A^H_0 > A^H_1$, consistent with Corollary~\ref{cor:Amonotone-strict}
restricting the failure to $q = 2$.

\section{Ratio decomposition for symplectic hull}\label{sec:symp}

We now establish the analogous theory for the symplectic inner product.
The structure parallels Section~\ref{sec:main}, but with two key differences:
the constraint $k - \ell \in 2\mathbb{Z}$ forces hull dimensions to step by $2$, and
the closed-form ratio $\alpha^S$ behaves qualitatively differently from $\alpha^H$.

\subsection{Main theorem}\label{sec:Smain}

Unlike the Hermitian case (Theorem~\ref{thm:LSL}), the symplectic mass formula
of Theorem~\ref{thm:LSL-symp} has no parity case-split: the formula is uniform
across all valid parameters. This makes the derivation of $\alpha^S$
algebraically more direct, and we can state the closed form and the
exception classification as a single result.

\begin{theorem}\label{thm:main-symp}
Let $n, k, \ell$ be non-negative integers with $\ell + 2 \leq k \leq 2n - \ell - 2$
and $k - \ell$ even. Set $a = k - \ell$ and $b = 2n - k - \ell$ (both even). Then
\[
  A^S_{2n,k,\ell,q} = \alpha^S_{2n,k,\ell,q} \cdot (q^{\ell+1}-1)(q^{\ell+2}-1) \cdot A^S_{2n,k,\ell+2,q},
\]
where
\begin{equation}\label{eq:alpha-symp}
  \alpha^S_{2n,k,\ell,q} = \frac{q^{a+b-2}}{(q^{a}-1)(q^{b}-1)}.
\end{equation}
\end{theorem}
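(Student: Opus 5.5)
The plan is a direct computation of the quotient $A^S_{2n,k,\ell,q}/A^S_{2n,k,\ell+2,q}$ from the closed form of Theorem~\ref{thm:LSL-symp}. Since that formula has no parity split, no sign device like Lemma~\ref{lem:unified} is needed.

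Write $k-\ell = 2k_0$ and introduce the auxiliary quantity $m := n-k_0-\ell$. Then $a = 2k_0$ and $b = 2n-2k_0-2\ell = 2m$. The hypothesis $\ell+2\le k\le 2n-\ell-2$ gives $k_0\ge 1$ and $m\ge 1$, so both counts are covered by Theorem~\ref{thm:LSL-symp}.

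Passing from $\ell$ to $\ell+2$ with $k$ fixed sends $k_0\mapsto k_0-1$. The quantity $n-k_0-\ell$ becomes $n-(k_0-1)-(\ell+2) = m-1$. I would write both counts in the variables $(k_0,m,\ell)$ and compare the three factors separately.

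\emph{Power of $q$.} The ratio is $q^{2k_0 m - 2(k_0-1)(m-1)} = q^{2k_0+2m-2} = q^{a+b-2}$.

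\emph{Product factor.} For $\ell$ the numerator is $\prod_{j=1}^{\ell}(q^{2(m+j)}-1)$. For $\ell+2$ it is $\prod_{j=1}^{\ell+2}(q^{2(m-1+j)}-1) = \prod_{j=0}^{\ell+1}(q^{2(m+j)}-1)$. Their quotient is therefore $1/\bigl[(q^{2m}-1)(q^{2(m+\ell+1)}-1)\bigr]$. The denominators $\prod_{j=1}^{\ell}(q^j-1)$ and $\prod_{j=1}^{\ell+2}(q^j-1)$ contribute the factor $(q^{\ell+1}-1)(q^{\ell+2}-1)$ to the quotient.

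\emph{Gaussian binomials.} From the product definition,
\[
\begin{bmatrix} n \\ k_0\end{bmatrix}_{q^2}\Big/\begin{bmatrix} n \\ k_0-1\end{bmatrix}_{q^2} = \frac{q^{2(n-k_0+1)}-1}{q^{2k_0}-1}.
\]
Since $n-k_0+1 = m+\ell+1$, this numerator cancels exactly against the factor $q^{2(m+\ell+1)}-1$ from the product step. Assembling the three pieces gives
\[
\frac{A^S_{2n,k,\ell,q}}{A^S_{2n,k,\ell+2,q}} = (q^{\ell+1}-1)(q^{\ell+2}-1)\cdot\frac{q^{a+b-2}}{(q^{2k_0}-1)(q^{2m}-1)},
\]
which is \eqref{eq:alpha-symp} with $a=2k_0$ and $b=2m$.

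The only real obstacle is bookkeeping at the edges. Theorem~\ref{thm:LSL-symp} is stated for positive $\ell$, but the present statement allows $\ell = 0$. In that case I would use the LCD count $L_S(n,k_0,q)$, which is the same formula with an empty product, and note that the computation above goes through verbatim. I would also check that the index shift in the product factor is correct, since the cancellation of $q^{2(m+\ell+1)}-1$ against the binomial ratio is the crux of the argument. As a sanity check, I would verify a small case such as $(2n,k,\ell,q)=(4,2,0,2)$ numerically against Theorem~\ref{thm:LSL-symp}.
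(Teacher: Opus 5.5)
Your proposal is correct and follows essentially the same route as the paper: a factor-by-factor comparison of the closed form of Theorem~\ref{thm:LSL-symp} under $k_0 \mapsto k_0-1$, where the factor $q^{2(n-k_0+1)}-1$ from the numerator product cancels against the Gaussian-binomial ratio. Your explicit treatment of the $\ell=0$ edge case, via the empty-product LCD formula $L_S(n,k_0,q)$, is a small point that the paper's proof leaves implicit.
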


\begin{proof}
Set $k_0 = (k-\ell)/2$. By Theorem~\ref{thm:LSL-symp}, we may write
$A^S_{2n, k, \ell, q} = q^{2k_0(n-k_0-\ell)} \cdot P_\ell \cdot \binom{n}{k_0}_{q^2}$
where $P_\ell = \prod_{m=1}^{\ell}(q^{2(n-k_0-\ell+m)}-1) / \prod_{m=1}^{\ell}(q^m-1)$.
Going from $\ell$ to $\ell+2$ sends $k_0 \to k_0 - 1$, so we compute the ratio
$A^S_\ell / A^S_{\ell+2}$ as a product of four factors arising from the four
ingredients of the formula:
\begin{itemize}
\item The prefactor ratio gives $q^{2(n-\ell-1)}$.
\item The numerator product ratio gives $1 / [(q^{2(n-k_0-\ell)}-1)(q^{2(n-k_0+1)}-1)]$.
\item The denominator product ratio gives $(q^{\ell+1}-1)(q^{\ell+2}-1)$.
\item The Gaussian binomial ratio gives $(q^{2(n-k_0+1)}-1) / (q^{2k_0}-1)$.
\end{itemize}
Multiplying these together, the factors $(q^{2(n-k_0+1)}-1)$ cancel. Using
$2(n-k_0-\ell) = b$ and $2k_0 = a$, the resulting ratio simplifies to
\[
  \frac{A^S_\ell}{A^S_{\ell+2}} = \frac{q^{a+b-2} (q^{\ell+1}-1)(q^{\ell+2}-1)}{(q^a-1)(q^b-1)},
\]
which gives~\eqref{eq:alpha-symp}.
\end{proof}

\medskip
The asymptotic ratio
$A^S_\ell / A^S_{\ell+2} \to (q^{\ell+1}-1)(q^{\ell+2}-1)/q^2$
(Corollary~\ref{cor:alpha-symp-asym}) falls below~$1$ at $\ell = 0$ when
$q = 2$ (where it equals $3/4$). Consequently, the natural BBO-style
monotonicity inequality $A^S_{\ell} > (q^{\ell+1}-1)(q^{\ell+2}-1) A^S_{\ell+2}$
fails for a substantial range of parameters at $q = 2$.

\begin{theorem}\label{thm:exception-symp}
For valid parameters $(2n, k, \ell, q)$:
\[
  A^S_{2n,k,\ell,q} > (q^{\ell+1}-1)(q^{\ell+2}-1) \cdot A^S_{2n,k,\ell+2,q}
\]
holds for \emph{all} $q \geq 3$, and for $q = 2$ if and only if $\ell \geq 1$
or $\min(k, 2n-k) \leq 2$.
The inequality fails precisely in the family
\[
  \mathcal{E}_S := \{(2n, k, 0, 2) : 4 \leq k \leq 2n-4,\ k \text{ even}\}.
\]
In each case in $\mathcal{E}_S$, we have $\alpha^S(q-1)(q^2-1) = 3 \cdot 2^{a+b-2}/[(2^a-1)(2^b-1)] < 1$
with $a = k$, $b = 2n - k$.
\end{theorem}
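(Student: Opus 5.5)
By Theorem~\ref{thm:main-symp}, the inequality
\[
  A^S_{2n,k,\ell,q} > (q^{\ell+1}-1)(q^{\ell+2}-1)\,A^S_{2n,k,\ell+2,q}
\]
is equivalent to $\alpha^S_{2n,k,\ell,q}(q^{\ell+1}-1)(q^{\ell+2}-1) > 1$. Here $A^S_{2n,k,\ell+2,q}>0$ for valid parameters. Using~\eqref{eq:alpha-symp}, this becomes the purely arithmetic inequality
\[
  q^{a+b-2}(q^{\ell+1}-1)(q^{\ell+2}-1) > (q^a-1)(q^b-1) = q^{a+b}-q^a-q^b+1 .
\]
The validity constraint $\ell+2\le k\le 2n-\ell-2$ gives $a,b\ge 2$, both even. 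The plan is to split into a generic regime, handled by a crude bound, and the single delicate regime $q=2$, $\ell=0$.

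\emph{Generic regime.} Since $a,b\ge 1$, we have $(q^a-1)(q^b-1)<q^{a+b}$. It therefore suffices to show $(q^{\ell+1}-1)(q^{\ell+2}-1)\ge q^2$.
\begin{itemize}
\item For $\ell\ge 1$ and any $q\ge 2$, the left side is increasing in $\ell$. It is thus at least $(q^2-1)(q^3-1)=q^5-q^3-q^2+1$, which exceeds $q^2$ because $q^5-q^3-2q^2+1\ge q^3(q^2-1)-2q^2+1>0$.
\item For $\ell=0$ and $q\ge 3$, we need $(q-1)(q^2-1)\ge q^2$. This holds at $q=3$ since $16\ge 9$, and for larger $q$ because $(q-1)(q^2-1)/q^2=(q-1)(1-q^{-2})$ is increasing in $q$.
\end{itemize}
This settles all $q\ge3$ and all $\ell\ge1$ at $q=2$.

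\emph{Delicate regime $q=2$, $\ell=0$.} Now $a=k$ and $b=2n-k$, and the inequality reads $3\cdot 2^{a+b-2} > 2^{a+b}-2^a-2^b+1$, i.e.
\[
  2^a+2^b-1 > 2^{a+b-2}.
\]
This is the step requiring care, since the crude bound above fails here.
\begin{itemize}
\item If $\min(a,b)=2$, say $a=2$, the inequality reads $2^b+3>2^b$, which is true. This is exactly the case $\min(k,2n-k)\le 2$, the only possibility being $\min=2$ since $a,b\ge2$.
\item If $\min(a,b)\ge 4$ (recall $a,b$ are even), write $m=\max(a,b)$. Then $2^{a+b-2}\ge 2^{m+2}=4\cdot 2^m$, while $2^a+2^b-1<2\cdot 2^m$. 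So the inequality fails, and in fact $\alpha^S(q-1)(q^2-1)<1$ strictly.
\end{itemize}
The failure set is precisely $\ell=0$, $q=2$, $k$ even, $4\le k\le 2n-4$, which is $\mathcal{E}_S$. The displayed value $3\cdot 2^{a+b-2}/[(2^a-1)(2^b-1)]$ is just~\eqref{eq:alpha-symp} multiplied by $(2-1)(2^2-1)=3$.

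Combining the two regimes yields the stated equivalence: the inequality holds for all $q\ge3$, and for $q=2$ if and only if $\ell\ge1$ or $\min(k,2n-k)\le2$. The only real obstacle is recognizing that the crude bound $(q^a-1)(q^b-1)<q^{a+b}$ loses too much at $q=2$, $\ell=0$. There one must keep the $-q^a-q^b$ correction terms and compare them with $2^{a+b-2}$ using the parity of $a$ and $b$.
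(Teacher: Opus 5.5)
Your proof is correct and follows the paper's argument: the same reduction to $q^{a+b-2}(q^{\ell+1}-1)(q^{\ell+2}-1) > (q^a-1)(q^b-1)$, the crude bound $(q^a-1)(q^b-1)<q^{a+b}$ for $\ell=0$, $q\ge 3$, and the same split into $\min(a,b)=2$ versus $\min(a,b)\ge 4$ at $q=2$, $\ell=0$. The only difference is that you also handle $\ell\ge 1$ with that crude bound, via $(q^2-1)(q^3-1)\ge q^2$. This is cleaner than the paper's explicit expansion of $N-D$ in that case.
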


\begin{proof}
Set $N = q^{a+b-2}(q^{\ell+1}-1)(q^{\ell+2}-1)$ and $D = (q^a-1)(q^b-1)$.
We show $N > D$ except on $\mathcal{E}_S$.

\emph{Case $\ell \geq 1$.} Since $q^{\ell+1} \geq q^2$ and $q^{\ell+2} \geq q^3$,
\[
  (q^{\ell+1}-1)(q^{\ell+2}-1) \geq (q^2-1)(q^3-1) = q^5 - q^3 - q^2 + 1.
\]
Hence $N \geq q^{a+b-2}(q^5 - q^3 - q^2 + 1)$, and
\[
  N - D \geq q^{a+b}(q^3 - q - 2)/q + (q^{a+b-1} - q^a - q^b + 1).
\]
For $q \geq 2$, $q^3 - q - 2 \geq 4$, hence the first term is positive,
and the second term is positive for $a + b \geq 4$ (always satisfied).
Thus $N > D$.

\emph{Case $\ell = 0$ and $q \geq 3$.}
We need $q^{a+b-2}(q-1)(q^2-1) > (q^a-1)(q^b-1)$.
Using $(q^a-1)(q^b-1) < q^{a+b}$,
it suffices to show $(q-1)(q^2-1) > q^2$.
At $q = 3$, $(q-1)(q^2-1) = 2 \cdot 8 = 16 > 9 = q^2$,
and $(q-1)(q^2-1) - q^2 = q^3 - 2q^2 - q + 1$ is increasing for $q \geq 3$.
Hence the inequality holds.

\emph{Case $\ell = 0$ and $q = 2$.} The condition $N > D$ becomes
$3 \cdot 2^{a+b-2} > (2^a-1)(2^b-1)$, i.e.,
$2^{a+b-2} \cdot 3 > 2^{a+b} - 2^a - 2^b + 1$, equivalently
$2^a + 2^b > 2^{a+b-2} + 1$.
Without loss of generality assume $a \leq b$.
\begin{itemize}
\item If $a = 2$: $2^b + 4 > 2^b + 1$ holds trivially. (Monotone.)
\item If $a \geq 4$ (and $b \geq 4$): $2^a + 2^b \leq 2 \cdot 2^b \leq 2^{a+b-2}$ since $2^{a-2} \geq 4$, hence
$2^{a+b-2} \geq 4 \cdot 2^b \geq 2(2^a + 2^b)$, contradicting the required inequality.
(Non-monotone, $\mathcal{E}_S$ case.)
\end{itemize}
This completes the proof.
\end{proof}

\begin{corollary}[Asymptotic behavior of $A^S_\ell / A^S_{\ell+2}$]\label{cor:alpha-symp-asym}
Let $\ell \geq 0$ and $q$ be fixed, and set $a = k - \ell$, $b = 2n - k - \ell$
(so $a, b \geq 0$ and $a + b + 2\ell = 2n$). Recall $a$ measures the
``room'' above $\ell$ in the code dimension ($k = a + \ell$) and $b$ the
corresponding room in the codimension ($2n - k = b + \ell$).
\begin{enumerate}
\item (\emph{Boundary regime: $k$ fixed, $n \to \infty$.})
Fix $a \geq 2$ and let $b \to \infty$. Then
\[
  \frac{A^S_{2n,k,\ell,q}}{A^S_{2n,k,\ell+2,q}}
  \;\longrightarrow\; \frac{q^{a-2}(q^{\ell+1}-1)(q^{\ell+2}-1)}{q^a - 1}.
\]
\item (\emph{Joint regime: $k \to \infty$ and $2n - k \to \infty$.})
Let $a, b \to \infty$ simultaneously. Then
\[
  \frac{A^S_{2n,k,\ell,q}}{A^S_{2n,k,\ell+2,q}}
  \;\longrightarrow\; \frac{(q^{\ell+1}-1)(q^{\ell+2}-1)}{q^2}.
\]
\item The asymptotic ratio in~(2) is minimized at $\ell = 0$, equal to
$(q-1)(q^2-1)/q^2$. At $q = 2$ this equals $3/4 < 1$: the joint limit
$a, b \to \infty$ at $\ell = 0$, $q = 2$ is taken within the exception
family $\mathcal{E}_S$ of Theorem~\ref{thm:exception-symp}, and indeed
$\mathcal{E}_S$ contains arbitrarily large parameters, so the asymptotic
$3/4 < 1$ confirms that the BBO inequality fails throughout $\mathcal{E}_S$
as $2n$ grows. For $q \geq 3$, the asymptotic ratio is
$(q-1)^2(q+1)/q^2 \geq 16/9 > 1$, lying outside the exception family.
For each fixed $\ell \geq 1$ the asymptotic ratio is strictly larger (e.g.,
$A^S_1/A^S_3 \to 21/4$ at $q = 2$) and monotonicity holds asymptotically
in all cases.
\end{enumerate}
\end{corollary}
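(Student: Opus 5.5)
The plan is to read everything off the closed form of Theorem~\ref{thm:main-symp}. It gives
\[
  \frac{A^S_{2n,k,\ell,q}}{A^S_{2n,k,\ell+2,q}} = \alpha^S_{2n,k,\ell,q}\,(q^{\ell+1}-1)(q^{\ell+2}-1),
  \qquad
  \alpha^S_{2n,k,\ell,q} = \frac{q^{a+b-2}}{(q^a-1)(q^b-1)},
\]
with $a = k-\ell$ and $b = 2n-k-\ell$. The asymptotic claims are then limits of this exact expression.

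For part~(1), fix $a\ge 2$ and let $b\to\infty$. I will write $\alpha^S$ as
\[
  \frac{q^{a-2}}{q^a-1}\cdot\frac{q^b}{q^b-1},
\]
and the second factor tends to $1$. Multiplying by $(q^{\ell+1}-1)(q^{\ell+2}-1)$ gives the stated limit. For part~(2), both factors $q^a/(q^a-1)$ and $q^b/(q^b-1)$ tend to $1$. Hence $\alpha^S \to q^{-2}$, and the ratio tends to
\[
  \frac{(q^{\ell+1}-1)(q^{\ell+2}-1)}{q^2}.
\]

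For part~(3), I will note that $\ell\mapsto (q^{\ell+1}-1)(q^{\ell+2}-1)$ is strictly increasing, since both factors increase and are positive. So the minimum over $\ell\ge 0$ is at $\ell=0$, where it equals $(q-1)(q^2-1)/q^2$.
\begin{itemize}
\item At $q=2$ this is $3/4$.
\item For $q\ge 3$ I rewrite it as $(q-1)^2(q+1)/q^2$. This is increasing in $q$, which follows from the monotonicity already used in the proof of Theorem~\ref{thm:exception-symp}, or directly because $(q-1)^2/q^2$ and $q+1$ both increase. At $q=3$ it equals $16/9$.
\item The example $\ell=1$, $q=2$ gives $3\cdot 7/4 = 21/4$.
\end{itemize}

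To connect part~(3) to $\mathcal{E}_S$, I will check that at $\ell=0$, $q=2$ the joint limit can be taken with $a=k$ and $b=2n-k$ even and at least $4$. Such parameters lie in $\mathcal{E}_S$, and they are unbounded, so the family is infinite. This is consistent with Theorem~\ref{thm:exception-symp}, which already establishes the failure of the inequality pointwise on $\mathcal{E}_S$.

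I expect no real obstacle. The only care needed is in two places. First, "minimized at $\ell=0$" refers to the limiting expression, not to the finite ratios, and I will say so. Second, the parity and range constraints ($a,b$ even, $a\ge 2$) keep each limit inside the valid parameter set of Theorem~\ref{thm:main-symp}. The remaining steps are routine limit computations.
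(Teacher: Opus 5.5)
Your proposal is correct and follows the paper's proof essentially step for step. It uses the same factorization $\alpha^S = \frac{q^{a-2}}{q^a-1}\cdot\frac{q^b}{q^b-1}$ for the boundary regime, the same joint limit $\alpha^S \to q^{-2}$, and the same monotonicity in $\ell$ plus the evaluation at $q=3$ for part~(3). Your added remarks go slightly beyond the paper's narrative without changing the argument: you check that the $q=2$, $\ell=0$ joint limit can be taken along even $a,b\ge 4$ inside $\mathcal{E}_S$, and you note that pointwise failure on $\mathcal{E}_S$ comes from Theorem~\ref{thm:exception-symp} rather than from the limit itself.
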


\begin{proof}
By Theorem~\ref{thm:main-symp},
$A^S_\ell/A^S_{\ell+2} = \alpha^S \cdot (q^{\ell+1}-1)(q^{\ell+2}-1)$ with
$\alpha^S = q^{a+b-2}/[(q^a-1)(q^b-1)]$.

(1) For fixed $a$ and $b \to \infty$, $(q^b - 1)/q^b \to 1$, so
\[
  \alpha^S = \frac{q^{a-2}}{q^a - 1} \cdot \frac{q^b}{q^b - 1}
  \;\longrightarrow\; \frac{q^{a-2}}{q^a - 1},
\]
and multiplying by $(q^{\ell+1}-1)(q^{\ell+2}-1)$ gives the displayed formula.

(2) For $a, b \to \infty$ jointly, both $(q^a - 1)/q^a$ and $(q^b - 1)/q^b$
tend to $1$, hence $\alpha^S \to q^{-2}$ and $A^S_\ell/A^S_{\ell+2} \to
(q^{\ell+1}-1)(q^{\ell+2}-1)/q^2$.

(3) The function $\ell \mapsto (q^{\ell+1}-1)(q^{\ell+2}-1)$ is strictly
increasing on $\ell \geq 0$, so the minimum over admissible $\ell$ is attained
at $\ell = 0$ with value $(q-1)(q^2-1)$. At $q = 2$ this divides by $q^2 = 4$
to give $3/4$. For $q \geq 3$, write $(q-1)(q^2-1) = (q-1)^2(q+1)$; the
function $(q-1)^2(q+1)/q^2$ is increasing in $q$ for $q \geq 3$, and at
$q = 3$ equals $4 \cdot 4/9 = 16/9$, so the quantity is $\geq 16/9$
throughout. At $\ell \geq 1$, the asymptotic ratio
$(q^{\ell+1}-1)(q^{\ell+2}-1)/q^2$ exceeds $(q^2-1)(q^3-1)/q^2 = 21/4 > 1$
at $q = 2, \ell = 1$, with strict monotone growth in $\ell$, hence exceeds
$1$ for all $\ell \geq 1$ and all $q \geq 2$.
\end{proof}

\subsection{Numerical illustration of the exception family
  \texorpdfstring{$\mathcal{E}_S$}{E\_S}}\label{sec:Snum}

Table~\ref{tab:symplectic-A} lists $A^S_{2n,k,\ell,q}$ from
Theorem~\ref{thm:LSL-symp} for small parameters. Boldface marks entries where
$A^S_\ell$ strictly exceeds $A^S_{\ell-2}$, confirming
Theorem~\ref{thm:exception-symp} on the family
$\mathcal{E}_S = \{(2n, k, 0, 2) : 4 \leq k \leq 2n - 4,\ k \text{ even}\}$
of $A$-monotonicity failures.

\begin{table}[h]
\centering
\caption{$A^S_{2n,k,\ell,q}$ (closed form, Theorem~\ref{thm:LSL-symp}).
Boldface marks $A^S_2 > A^S_0$.}
\label{tab:symplectic-A}
\begin{tabular}{ccc|rrrr}
\toprule
$2n$ & $k$ & $q$ & $A^S_0$ & $A^S_2$ & $A^S_4$ & $A^S_6$ \\
\midrule
4 & 2 & 2 & 20 & 15 & & \\
6 & 2 & 2 & 336 & 315 & & \\
8 & 2 & 2 & 5440 & 5355 & & \\
8 & 4 & 2 & 91392 & \textbf{107100} & 2295 & \\
10 & 2 & 2 & 87296 & 86955 & & \\
10 & 4 & 2 & 23744512 & \textbf{29216880} & 782595 & \\
12 & 4 & 2 & 6100942848 & \textbf{7596388800} & 213648435 & \\
12 & 6 & 2 & 98777169920 & \textbf{127619331840} & 4272968700 & 4922775 \\
\midrule
4 & 2 & 3 & 90 & 40 & & \\
6 & 2 & 3 & 7371 & 3640 & & \\
8 & 2 & 3 & 597780 & 298480 & & \\
8 & 4 & 3 & 48958182 & 26863200 & 91840 & \\
\bottomrule
\end{tabular}
\end{table}

At $q = 2$, all entries with $4 \leq k \leq 2n - 4$ exhibit
$A^S_0 < A^S_2$, consistent with Theorem~\ref{thm:exception-symp}.
At $q = 3$, $A^S$ is strictly monotone in $\ell$ in every computed case,
consistent with Theorem~\ref{thm:exception-symp} restricting failures to $q = 2$.

\section{Comparison and applications}\label{sec:comp-app}

Table~\ref{tab:comparison} juxtaposes the Euclidean (BBO~\cite{BBOz2026}),
Hermitian (Section~\ref{sec:main}), and symplectic (Section~\ref{sec:symp}) ratio
decompositions side by side.

\begin{table}[h]
\centering
\footnotesize
\caption{Comparison of the ratio decompositions across the three classical
inner products. Here $a = k - \ell$, $b = n - k - \ell$ (Euclidean/Hermitian)
or $b = 2n - k - \ell$ (symplectic); $\Delta$ is the step in $\ell$.}
\label{tab:comparison}
\setlength{\tabcolsep}{2pt}
\begin{tabular}{l|c|c|c}
\toprule
& Euclidean~\cite{BBOz2026} & Hermitian & Symplectic \\
\midrule
Step $\Delta$ in $\ell$ & $1$ & $1$ & $2$ \\[2pt]
$\alpha^*$ closed form &
   case-split &
   $\dfrac{q^{n-2\ell-1}(q^{\ell+1}+1)}{(q^a-(-1)^a)(q^b-(-1)^b)}$ &
   $\dfrac{q^{a+b-2}}{(q^a-1)(q^b-1)}$ \\[6pt]
$\alpha^*$ lower bound & $1/2$ & $q/(q+1) \geq 2/3$ & none $> 1$ \\
$\alpha^*$ asymptotic & $(q+1)/q$ & $(q+1)/q$ & $1/q^2$ \\[2pt]
Asymp.\ $A_0/A_\Delta$ & $(q^2{-}1)/q$ & $(q^2{-}1)/q$ & $(q{-}1)(q^2{-}1)/q^2$ \\
\;at $q = 2$ & $3/2$ & $3/2$ & $\mathbf{3/4}$ \\
\;at $q \geq 3$ & $\geq 8/3$ & $\geq 8/3$ & $\geq 16/9$ \\
\midrule
Exceptions & $\eta$-char + $\bmod 4$ & $\ell{=}0,\,n$ even, $k{\in}\{1,n{-}1\}$ & $q{=}2,\,\ell{=}0,\,4{\leq}k{\leq}2n{-}4$ \\
\bottomrule
\end{tabular}
\end{table}

The contrast in Table~\ref{tab:comparison} tracks the Witt classification
of the underlying form. The orthogonal group over odd $\F_q$ splits
non-degenerate subspaces into two isometry classes (the discriminant lying in
$\F_q^*/(\F_q^*)^2$), producing the $\eta$-character case-split in the BBO
formula and the worst-case bound $\alpha \geq 1/2$. The unitary and symplectic
groups, in contrast, each act with a single orbit on LCD codes, so the closed
forms $\alpha^H$ and $\alpha^S$ contain no quadratic-character factor. The
symplectic case further degenerates: every vector is self-orthogonal, the
parity constraint $k - \ell \in 2\mathbb{Z}$ is forced, and the loss of a
hyperbolic pair per step of~$2$ in $\ell$ yields an asymptotic ratio of
$1/q^2$ rather than $(q+1)/q$. Consequently, the two exception sets to the
BBO-style inequality are qualitatively different: a $0$-dimensional Hermitian
boundary versus a $1$-parameter symplectic family of generic failures at
$q = 2$.

The closed-form counts $A^H$ and $A^S$ have direct relevance to quantum
coding theory, sketched below.

\subsection{Symplectic entanglement-assisted stabilizer codes}

Relaxing the symplectic self-orthogonality required by KKKS-style
stabilizer constructions~\cite{KKKS2006} to allow shared entanglement
yields the entanglement-assisted stabilizer formalism of Brun, Devetak,
and Hsieh~\cite{BDH2006}. The minimum number of pre-shared maximally
entangled pairs needed by such a construction was determined by Wilde
and Brun~\cite[Theorem~1]{WildeBrun2008}.

\begin{theorem}[{\cite[Theorem~1]{WildeBrun2008}}]\label{thm:WB}
Let $H = [H_Z \mid H_X]$ be an $(n - k) \times 2n$ binary quantum check
matrix. The resulting entanglement-assisted stabilizer code has parameters
$[[n, k + c; c]]_2$, where the optimal number of ebits is
\[
   c \;=\; \tfrac{1}{2}\,\mathrm{rank}\bigl(H_X H_Z^T + H_Z H_X^T\bigr).
\]
\end{theorem}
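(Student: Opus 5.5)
Theorem~\ref{thm:WB} is a cited result of Wilde and Brun, so the plan is to reconstruct their argument in the language of Section~\ref{sec:prelim}, namely symplectic Gram matrices and Proposition~\ref{prop:hull-dim}. Write the $n-k$ rows of $H$ as vectors $h_1,\dots,h_{n-k}\in\F_2^{2n}$, listing $Z$-parts then $X$-parts. Two Pauli operators commute if and only if their symplectic product vanishes. For rows written $[z\mid x]$, this product is $z\cdot x' + x\cdot z'$.

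\emph{Step 1: the rank identity.} The Gram matrix of the rows is
\[
G = H\Omega_n H^T = H_Z H_X^T + H_X H_Z^T,
\]
since $-1=1$ in characteristic $2$. This $G$ is alternating, so by Proposition~\ref{prop:hull-dim} its rank $2c$ is even. Moreover $\dim \mathrm{Hull}_S(\mathrm{rowspace}\, H) = (n-k)-2c$.

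\emph{Step 2: symplectic Gram--Schmidt.} A symplectic basis change of the row space (row operations, which preserve the stabilizer group) brings the rows to a standard form: $c$ hyperbolic pairs $(g_i,h_i)$ with $\langle g_i,h_j\rangle_S=\delta_{ij}$ and all other products zero, plus $n-k-2c$ isotropic generators spanning the hull. This is the normal form of an alternating matrix under congruence.

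\emph{Step 3: extension and count.} For each hyperbolic pair, adjoin one ebit. Append a $Z$ on Bob's half to $g_i$ and an $X$ to $h_i$. The extended pair then commutes, because the anticommutation on Alice's side is cancelled on Bob's side. Isotropic generators are padded with identity. The extended set is an abelian group with $n-k$ independent generators on $n+c$ qubits, and it includes the $c$ ebits. It therefore encodes $(n+c)-(n-k)=k+c$ logical qubits, giving $[[n,k+c;c]]$.

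\emph{Step 4: optimality.} Suppose some extension by $c'$ ebits makes all generators commute. Then the extended Gram matrix equals $G$ plus the Gram matrix of the Bob-side parts, and it must vanish. Hence $G$ equals a Gram matrix of vectors in $\F_2^{2c'}$, whose rank is at most $2c'$. So $c'\ge \tfrac12\mathrm{rank}\,G=c$.

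The main obstacle is Step~2, the symplectic Gram--Schmidt. It must be done while keeping track that the row operations preserve both the stabilizer group and the encoded dimension. I would run an explicit induction: pick a row $g$ with a partner $h$ satisfying $\langle g,h\rangle_S=1$, then symplectically clear $g$ and $h$ from all other rows, and recurse on the rest.
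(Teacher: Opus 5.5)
The paper does not prove this statement. It is quoted from Wilde and Brun, and the only argument the paper supplies is the remark that follows it: $H_XH_Z^T+H_ZH_X^T$ is the symplectic Gram matrix $H\Omega_nH^T$ of the rows of $H$, and its rank is $m-\dim(\mathrm{Hull}_S(C))$. That is exactly your Step~1.

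Your Steps~2--4 reconstruct the standard argument, and they are correct:
\begin{itemize}
\item the normal form of an alternating matrix under congruence;
\item one ebit per hyperbolic pair;
\item the lower bound $2c=\mathrm{rank}\,G=\mathrm{rank}(B\Omega_{c'}B^T)\le 2c'$ for any Bob-side padding $B$ that makes the extension commute.
\end{itemize}

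Some refinements:
\begin{itemize}
\item You need $H$ to have full row rank. This is required both for the count $(n+c)-(n-k)=k+c$ and for invoking Proposition~\ref{prop:hull-dim}.
\item Step~2 is not really an obstacle. It is just the normal form of an alternating form over $\F_2$, since row operations act by congruence $G\mapsto MGM^T$.
\item The point that actually deserves a sentence is in Step~3. An abelian extension on $n+c$ qubits is an entanglement-assisted code only if Alice alone can prepare it from $|\Phi^+\rangle^{\otimes c}$, ancillas and data.
\end{itemize}

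The last point holds, but you should say why. The partial symplectic basis $g_1,h_1,\dots,g_c,h_c,e_1,\dots,e_{n-k-2c}$ extends to a full symplectic basis of $\F_2^{2n}$. This gives a Clifford $U$ on Alice's qubits with $UZ_{A_i}U^\dagger=g_i$, $UX_{A_i}U^\dagger=h_i$ and $UZ_{A_{c+j}}U^\dagger=e_j$. That $U$ carries the canonical stabilizer $\langle Z_{A_i}Z_{B_i},X_{A_i}X_{B_i},Z_{A_{c+j}}\rangle$ onto your extended group, with signs chosen so that $-I$ is excluded.
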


The matrix $H_X H_Z^T + H_Z H_X^T$ is the Gram matrix of the rows of $H$
under the symplectic form $\langle\cdot,\cdot\rangle_S$ of
Section~\ref{sec:prelim-dual}; viewing the row span of $H$ as an
$\F_2$-linear $[2n, m]_2$ code $C = \langle H\rangle$ of dimension
$m = n - k$, the Gram rank equals $m - \dim(\mathrm{Hull}_S(C))$ and
hence
\[
   c \;=\; \tfrac{1}{2}\bigl(m - \dim(\mathrm{Hull}_S(C))\bigr).
\]
An analogous formula holds for $\F_q$-linear codes via the
trace-symplectic form (implicit in the qudit Remark~1
of~\cite{WildeBrun2008} and in the additive machinery
of~\cite{KKKS2006}). Re-expressing in our parameters: an
$[2n, k]_q$ classical code $C$ with symplectic hull dimension $\ell$
yields an
\[
   \bigl[[\,n,\;\;n - \tfrac{k+\ell}{2},\;\; d;\;\; \tfrac{k - \ell}{2}\,]\bigr]_q
\]
entanglement-assisted stabilizer code requiring $c = (k - \ell)/2$
pre-shared ebits. The boundary cases $\ell = k$ (symplectic self-orthogonal)
and $\ell = 0$ (symplectic LCD) recover the pure $[[n, n-k, d]]_q$
stabilizer code of~\cite[Theorem~13]{KKKS2006} and the maximum-entanglement
$[[n, n - k/2, d; k/2]]_q$ EAQECC, respectively.

Theorem~\ref{thm:main-symp} therefore controls the count of EAQECCs of
each fixed entanglement level:

\begin{corollary}\label{cor:eaqecc-symp-monotone}
Fix $n$, $k$, $q$, and let $c = (k - \ell)/2$ denote the number of
pre-shared ebits in the EAQECC associated by Theorem~\ref{thm:WB}
to an $[2n,k]_q$ code with symplectic hull dimension $\ell$. For
$\ell + 2 \leq k \leq 2n - \ell - 2$ with $k - \ell$ even, the count
$A^S_{2n,k,\ell,q}$ of such classical codes satisfies the ratio
decomposition
\[
   A^S_{2n,k,\ell,q} \;=\; \alpha^S_{2n,k,\ell,q}\,(q^{\ell+1}-1)(q^{\ell+2}-1)\,
   A^S_{2n,k,\ell+2,q},
\]
with $\alpha^S$ as in Theorem~\ref{thm:main-symp}. Consequently, outside
the exception family $\mathcal{E}_S$, $A^S_{2n,k,\ell,q}$ is strictly
decreasing in $\ell$, equivalently strictly increasing in $c$: EAQECCs
demanding more pre-shared entanglement arise from a larger pool of
classical codes. On $\mathcal{E}_S$ (at $q = 2$ with $\ell = 0$ and
$4 \leq k \leq 2n - 4$, $k$ even), the dependence reverses: the
maximum-entanglement family ($c = k/2$) is locally outnumbered by the
$c = (k-2)/2$ family.
\end{corollary}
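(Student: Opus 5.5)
The corollary is essentially a repackaging of Theorems~\ref{thm:main-symp} and~\ref{thm:exception-symp} in the language of Theorem~\ref{thm:WB}. The proof therefore has two parts. First, establish the dictionary between hull dimension $\ell$ and ebit count $c=(k-\ell)/2$. Second, transport the ratio decomposition and the exception classification through this dictionary.

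For the dictionary, I rely on the discussion preceding the corollary. The Gram matrix of the rows of a generator matrix under $\langle\cdot,\cdot\rangle_S$ has rank $k-\dim\mathrm{Hull}_S(C)$ by Proposition~\ref{prop:hull-dim}(2), and this gives $c=(k-\ell)/2$. For fixed $k$, the map $\ell\mapsto c$ is a strictly decreasing bijection between admissible $\ell$ (with $k-\ell$ even) and admissible $c$. Increasing $\ell$ by $2$ corresponds exactly to decreasing $c$ by $1$. So "strictly decreasing in $\ell$" and "strictly increasing in $c$" are the same statement, and it suffices to argue in terms of $\ell$. I will not need a separate $\F_q$ version of Theorem~\ref{thm:WB}. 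The counted objects are the classical codes $A^S_{2n,k,\ell,q}$, and the corollary only labels them by $c$.

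The displayed ratio decomposition is exactly Theorem~\ref{thm:main-symp}, whose hypotheses ($\ell+2\le k\le 2n-\ell-2$, $k-\ell$ even) coincide with those of the corollary, so I would simply cite it. For the monotonicity claim, I take $(2n,k,\ell,q)\notin\mathcal{E}_S$. Theorem~\ref{thm:exception-symp} then gives $A^S_{\ell}>(q^{\ell+1}-1)(q^{\ell+2}-1)A^S_{\ell+2}$. Since $q\ge2$ and $\ell\ge0$, the factor $(q^{\ell+1}-1)(q^{\ell+2}-1)\ge 3>1$. Also $A^S_{\ell+2}>0$ in the valid range, because the closed form in Theorem~\ref{thm:LSL-symp} is a product of positive factors. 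Together these give $A^S_\ell>A^S_{\ell+2}$. This is the consecutive-step comparison in $\ell$, which is the precise meaning of "strictly decreasing".

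On $\mathcal{E}_S$, I would show reversal, not merely failure of the BBO-style inequality. Here $q=2$, $\ell=0$, $a=k$, $b=2n-k$, with $4\le a$ and $4\le b$. Theorem~\ref{thm:exception-symp} gives
$A^S_0/A^S_2=3\cdot 2^{a+b-2}/[(2^a-1)(2^b-1)]$, and the task is to show this is less than $1$. Taking $a\le b$, the proof of that theorem already shows $2^a+2^b\le 2\cdot 2^b<2^{a+b-2}+1$. This rearranges to $3\cdot2^{a+b-2}<(2^a-1)(2^b-1)$, so $A^S_0<A^S_2$. In the language of the corollary, the $c=k/2$ family is outnumbered by the $c=(k-2)/2$ family.

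The main point needing care is this reversal. Theorem~\ref{thm:exception-symp} states failure of the weighted inequality, and when $\ell=0$ the weight $(q-1)(q^2-1)$ equals $3\neq1$. So I must check that the strict inequality $A^S_0<A^S_2$ really holds on all of $\mathcal{E}_S$, not just non-strict failure. The bound above settles it, and Table~\ref{tab:symplectic-A} serves as a sanity check.
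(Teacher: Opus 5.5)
Your route is the one the paper intends. The ratio decomposition is Theorem~\ref{thm:main-symp} verbatim, the $\ell\leftrightarrow c$ dictionary is just relabeling, and the monotone/reversal dichotomy is read off from Theorem~\ref{thm:exception-symp}. Your strict reversal argument on $\mathcal{E}_S$ is correct.

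\textbf{The gap is in the monotone direction.} You take from Theorem~\ref{thm:exception-symp} the weighted inequality $A^S_\ell>(q^{\ell+1}-1)(q^{\ell+2}-1)A^S_{\ell+2}$ and then divide out the weight $\geq 3$. That inequality is false for every admissible parameter:
\begin{itemize}
\item By Theorem~\ref{thm:main-symp} it is equivalent to $\alpha^S>1$.
\item For $a,b\ge 2$ one has $q^a-1>q^{a-1}(q-1)\ge q^{a-1}$, and likewise for $b$. Hence $(q^a-1)(q^b-1)>q^{a+b-2}$, so $\alpha^S<1$ always.
\item Concretely, $(2n,k,\ell,q)=(4,2,0,2)\notin\mathcal{E}_S$ has $A^S_0=20<45=3A^S_2$.
\item Likewise $(4,2,0,3)$ has $A^S_0=90<640=16A^S_2$.
\end{itemize}
So the displayed inequality in the statement of Theorem~\ref{thm:exception-symp} is misprinted, and citing it literally builds your monotone step on a false premise. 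Your own reversal paragraph already exposes this. There you use $A^S_0/A^S_2=3\cdot 2^{a+b-2}/[(2^a-1)(2^b-1)]$, so ``$A^S_0>3A^S_2$'' would require $2^{a+b-2}>(2^a-1)(2^b-1)$, which never holds.

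\textbf{The repair is to use what the proof of Theorem~\ref{thm:exception-symp} actually establishes.} There $N=q^{a+b-2}(q^{\ell+1}-1)(q^{\ell+2}-1)$ and $D=(q^a-1)(q^b-1)$ satisfy
\[
N/D=\alpha^S(q^{\ell+1}-1)(q^{\ell+2}-1)=A^S_\ell/A^S_{\ell+2}.
\]
So that case analysis gives exactly $A^S_\ell>A^S_{\ell+2}$ off $\mathcal{E}_S$. On $\mathcal{E}_S$ it gives $A^S_0<A^S_2$ (strictly, as your bound $2^a+2^b\le 2\cdot 2^b<2^{a+b-2}+1$ shows). There is no weight to divide out. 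Your worry in the last paragraph, that failure of a weighted inequality does not by itself give $A^S_0<A^S_2$, also disappears once you see that the quantity compared is the unweighted ratio. Your direct verification there is nonetheless correct and is precisely the $N<D$ computation.
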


\subsection{Entanglement-assisted quantum error-correcting codes}

The Hermitian-hull analogue is given by Guenda, Jitman, and
Gulliver~\cite{Dcc-EAqecc}.

\begin{theorem}[{\cite[Corollary~3.2]{Dcc-EAqecc}}]\label{thm:GJG}
Let $C$ be a classical $[n,k,d]_{q^2}$ code and let $C^{\perp_H}$ be its
Hermitian dual with parameters $[n, n-k, d^{\perp_H}]_{q^2}$. Then there exist
\[
   [[n,\; k - \dim(\HHull(C)),\; d;\; n-k-\dim(\HHull(C))]]_q
\]
and
\[
   [[n,\; n-k - \dim(\HHull(C)),\; d^{\perp_H};\; k-\dim(\HHull(C))]]_q
\]
EAQECCs. If $C$ is MDS, then the two EAQECCs are also MDS.
\end{theorem}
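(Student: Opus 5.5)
The plan is to derive both families from a single entanglement-assisted construction driven by a parity-check matrix, and then to express the ebit count through the Hermitian hull using Proposition~\ref{prop:hull-dim}. The base fact I would establish first, as the Hermitian analogue of Theorem~\ref{thm:WB}, is the following. If $H$ is an $(n-k)\times n$ parity-check matrix over $\F_{q^2}$ of an $[n,k,d]_{q^2}$ code $C$, then there is an $[[n,\,2k-n+c,\,d;\,c]]_q$ EAQECC with
\[
  c=\mathrm{rank}\bigl(H\overline{H}^T\bigr).
\]
This is the Brun--Devetak--Hsieh construction in its Hermitian form.

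To justify it, I would pass from $\F_{q^2}^n$ to $\F_q^{2n}$ by fixing a basis $\{1,\omega\}$ of $\F_{q^2}/\F_q$. Under this identification the trace-alternating form $\mathrm{Tr}_{q^2/q}\bigl((\langle x,y\rangle_H-\langle y,x\rangle_H)/(\omega-\omega^q)\bigr)$ becomes the symplectic form $\langle\cdot,\cdot\rangle_S$. The row space of $H$, viewed $\F_q$-linearly, is a $[2n,2(n-k)]_q$ code. Its symplectic Gram matrix has rank exactly $2\,\mathrm{rank}(H\overline{H}^T)$, because the $\F_q$-span of the rows and of $\omega$ times the rows turns each $\F_{q^2}$-rank unit of the Hermitian Gram matrix into one hyperbolic pair.

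The qudit version of Theorem~\ref{thm:WB} then gives the following count. There are $2(n-k)$ stabilizer generators and $c$ ebits, where $c$ is half the symplectic Gram rank, that is, $c=\mathrm{rank}(H\overline{H}^T)$. The number of logical qudits is $n-2(n-k)+c=2k-n+c$. For the distance, the undetectable errors are the nonzero elements of the symplectic dual of the extended stabilizer that are not in the stabilizer itself. These restrict to codewords of the Hermitian dual of $\langle H\rangle$, which is $C$, so the minimum distance is at least $d$.

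Next I would evaluate $c$ in terms of the hull. The rows of $H$ generate $C^{\perp_H}$, so Proposition~\ref{prop:hull-dim}(1) gives
\[
  c=(n-k)-\dim\HHull(C^{\perp_H}).
\]
Since $(C^{\perp_H})^{\perp_H}=C$, we have $\HHull(C^{\perp_H})=C^{\perp_H}\cap C=\HHull(C)$. Hence $c=n-k-\dim\HHull(C)$ and $2k-n+c=k-\dim\HHull(C)$, which gives the first family. For the second family I would apply the same construction to $C^{\perp_H}$, an $[n,n-k,d^{\perp_H}]_{q^2}$ code whose parity-check matrix is a generator matrix $G$ of $C$. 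Now $c=\mathrm{rank}(G\overline{G}^T)=k-\dim\HHull(C)$, and the dimension is $2(n-k)-n+c=n-k-\dim\HHull(C)$.

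For the MDS claim I would invoke the EA Singleton bound $2(d-1)\le n-\kappa+c$, where $\kappa$ denotes the number of logical qudits. For the first family, $n-\kappa+c=n-(k-h)+(n-k-h)=2(n-k)$, with $h=\dim\HHull(C)$. This equals $2(d-1)$ when $d=n-k+1$, so the bound is met with equality. The second family is handled symmetrically, since the dual of an MDS code is MDS with $d^{\perp_H}=k+1$.

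I expect the main obstacle to be the base construction, specifically the rank-doubling claim for the $\F_q$-expansion and the distance argument. If the rank-doubling step is delicate, my first fallback is to work directly over $\F_{q^2}$ with a symplectic-type Gram–Schmidt reduction of $H$. This would put $H\overline{H}^T$ in the normal form $\mathrm{diag}(I_c,0)$ and let me exhibit the $c$ ebits explicitly.
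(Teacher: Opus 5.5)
The paper gives no proof of this statement; it imports it from Guenda--Jitman--Gulliver. Your outline follows the same route as their derivation:
\begin{itemize}
\item the Hermitian entanglement-assisted construction with $c=\mathrm{rank}(H\overline{H}^T)$;
\item the identity $\mathrm{rank}(H\overline{H}^T)=n-k-\dim\HHull(C^{\perp_H})=n-k-\dim\HHull(C)$ from Proposition~\ref{prop:hull-dim};
\item the same construction applied to $C^{\perp_H}$, whose parity-check matrix is $G$, for the second family;
\item equality in the EA Singleton bound for the MDS claim.
\end{itemize}
The one place you go beyond that route is sketching the base construction yourself rather than quoting it as a known lemma. There, the displayed form is wrong as written.

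Set $z=\langle x,y\rangle_H-\langle y,x\rangle_H$. Both $z$ and $\omega-\omega^q$ are negated by the Frobenius, so $z/(\omega-\omega^q)$ already lies in $\F_q$. Applying $\mathrm{Tr}_{q^2/q}$ to it therefore just gives $2z/(\omega-\omega^q)$. For even $q$, including codes over $\F_4$, your form is identically zero, and the rank-doubling claim fails.

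The fix is to drop the trace. With $x_i=a_i+b_i\omega$ and $y_i=c_i+d_i\omega$, one computes $x_iy_i^q-x_i^qy_i=(a_id_i-b_ic_i)(\omega^q-\omega)$. Hence $z/(\omega^q-\omega)$, equivalently $\mathrm{Tr}_{q^2/q}\bigl(\langle x,y\rangle_H/(\omega^q-\omega)\bigr)$, is exactly $\langle\cdot,\cdot\rangle_S$.

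With that correction, your two remaining delicate steps both follow from one observation. For an $\F_{q^2}$-linear code $D$, replace $y$ by $\nu y$ with $\nu\in\F_{q^2}$ and use nondegeneracy of the trace. This shows that the $\F_q$-expansion of $D^{\perp_H}$ is precisely the symplectic dual of the expansion of $D$. Two consequences follow:
\begin{itemize}
\item The radical of the symplectic form on the expansion of $\langle H\rangle$ is the expansion of $\HHull(\langle H\rangle)$. So the symplectic Gram rank is $2(n-k)-2\dim\HHull(C^{\perp_H})=2\,\mathrm{rank}(H\overline{H}^T)$, which is cleaner than the hyperbolic-pair heuristic.
\item The undetectable Alice-side errors are identified with codewords of $\langle H\rangle^{\perp_H}=C$, which gives the distance bound $d$.
\end{itemize}
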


Substituting $\dim(\HHull(C)) = \ell$, every $[n,k]_{q^2}$ code with
Hermitian hull dimension $\ell$ thus yields an
$[[n,\, k - \ell,\, d;\, n-k-\ell]]_q$ EAQECC; the limiting cases
$\ell = 0$ (Hermitian LCD) and $\ell = k$ (Hermitian self-orthogonal) produce
respectively a maximal-entanglement $[[n,k,d;n-k]]_q$ EAQECC and a
$[[n,0,d;n-2k]]_q$ pure stabilizer-type code. The ratio decomposition of
Theorem~\ref{thm:main} thus controls the count of EAQECCs requiring exactly
$n - k - \ell$ pre-shared entanglement pairs:

\begin{corollary}\label{cor:eaqecc-monotone}
Fix $n$, $k$, $q$, and let $c = n - k - \ell$ denote the number of
maximally entangled pairs required by the EAQECC of Theorem~\ref{thm:GJG}
corresponding to an $[n,k]_{q^2}$ code with Hermitian hull dimension
$\ell$. For $\ell + 1 \leq k \leq n - \ell - 1$, the count
$A^H_{n,k,\ell,q}$ of such classical codes satisfies the ratio
decomposition of Theorem~\ref{thm:main}, and outside the boundary family
$\{(q, k) = (2, 1), (2, n-1) : n \text{ even}\}$ at $\ell = 0$,
$A^H_{n,k,\ell,q}$ is strictly decreasing in $\ell$, equivalently strictly
\emph{increasing} in the entanglement parameter $c$.
\end{corollary}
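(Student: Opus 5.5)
The plan is to derive Corollary~\ref{cor:eaqecc-monotone} almost verbatim from Theorem~\ref{thm:main} and Corollary~\ref{cor:Amonotone}. The link to quantum codes is pure bookkeeping, so there is little to prove beyond translation.

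\emph{Step 1: identify the quantities.} By Theorem~\ref{thm:GJG}, each $[n,k]_{q^2}$ code $C$ with $\dim(\HHull(C)) = \ell$ yields an $[[n,k-\ell,d;n-k-\ell]]_q$ EAQECC. So the entanglement parameter is $c = n-k-\ell$. With $n,k$ fixed, this is an affine, strictly decreasing bijection $\ell \mapsto c$. Hence the number of classical codes feeding the construction at entanglement level $c$ is exactly $A^H_{n,k,\ell,q}$ with $\ell = n-k-c$.

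\emph{Step 2: the ratio decomposition.} For $\ell+1 \leq k \leq n-\ell-1$, the identity $A^H_{n,k,\ell,q} = \alpha^H_{n,k,\ell,q}(q^{\ell+1}-1)A^H_{n,k,\ell+1,q}$ is precisely \eqref{eq:decomp}, with $\alpha^H$ given by \eqref{eq:alpha}. Nothing new is needed here.

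\emph{Step 3: strict monotonicity.} The range condition $\ell+1\leq k\leq n-\ell-1$ is the same as $0\leq \ell<\min(k,n-k)$, so Corollary~\ref{cor:Amonotone} applies. It gives $A^H_{n,k,\ell,q} > A^H_{n,k,\ell+1,q}$ except at $(q,\ell,k)\in\{(2,0,1),(2,0,n-1)\}$ with $n$ even. This is the stated boundary family.

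Chaining these inequalities over consecutive admissible $\ell$ gives strict decrease in $\ell$ on the whole admissible range. Translating through $c = n-k-\ell$ then gives strict increase in $c$.

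The only point needing care is the precise form of the exception. Corollary~\ref{cor:Amonotone} excludes only the single step $\ell=0\to1$ at $q=2$, $k\in\{1,n-1\}$, $n$ even. For these $k$ we have $\min(k,n-k)=1$, so that step is the only one available. The monotonicity claim is therefore vacuous or false only in that family, and holds everywhere else.

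There is no real obstacle. The main risk is a mismatch of index ranges between the corollary's hypothesis and Theorem~\ref{thm:main}. I would check this explicitly by noting that $\ell+1\le k\le n-\ell-1$ implies $a,b\geq1$, which is where \eqref{eq:alpha} is valid.
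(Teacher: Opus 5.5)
Your proposal is correct and takes the same route as the paper. The paper gives no separate proof; it treats the corollary as an immediate translation of Theorem~\ref{thm:main} and Corollary~\ref{cor:Amonotone} through the strictly decreasing bijection $\ell \mapsto c = n-k-\ell$, exactly as you do. One small correction: in the boundary family the monotonicity claim is false rather than vacuous, since there $\min(k,n-k)=1$ and the only available step reverses, giving $A^H_{n,k,0,2} < A^H_{n,k,1,2}$.
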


Corollary~\ref{cor:eaqecc-monotone} quantifies how rapidly the EAQECC pool
shrinks as one demands fewer pre-shared maximally entangled pairs (and
thereby a larger Hermitian hull), with the lower bound
$\alpha^H \geq q/(q+1)$ from Theorem~\ref{thm:main} improving on the
Euclidean analogue.

\section{Concluding remarks}\label{sec:concl}

We have extended the framework of~\cite{BBOz2026} from the Euclidean to the
Hermitian and symplectic inner products and traced the qualitative differences
between the three settings to the orbit count of the corresponding classical
group action on non-degenerate subspaces.

Several natural questions remain open. First, the BBO-style monotonicity
analysis at the level of inequivalent codes admits a Hermitian analogue
over $\F_4$ in the spirit of~\cite[Section IV]{BBOz2026}; preliminary
computations suggest a counterpart of the unique Euclidean exception
known for binary codes (the parameter $[11, 4]_2$ of~\cite{BBOz2026}).
Second, in the regimes where standard equivalence collapses (Euclidean for
$q \geq 4$ and Hermitian for $q^2 \geq 9$, by~\cite{CarletMesnagerTang2018}),
permutation equivalence yields a well-defined classification whose
monotonicity behavior is unexplored. Third, identifying a natural
equivalence for symplectic codes that both preserves the symplectic hull
dimension and yields a non-trivial classification remains open:
$\mathrm{Sp}(2n, q)$ trivializes the orbit structure by Witt's theorem,
while permutation and monomial equivalence break the partition; the
local-Clifford-plus-qubit-permutation group from stabilizer-code theory
may provide a natural intermediate refinement.

\section*{Acknowledgments.}
Computations were performed using SageMath~\cite{SageMath}.
This work was supported by JSPS KAKENHI Grant Number JP25K17290.

\end{document}